\newcommand{\cd}{\mathscr{D}}
\newcommand{\bi}{\mathbf{I}}
\newcommand{\pdt}{\partial_t} 
\definecolor{mypink1}{rgb}{0.858, 0.188, 0.478}
\definecolor{mypink2}{RGB}{219, 48, 122}
\definecolor{mypink3}{cmyk}{0, 0.7808, 0.4429, 0.1412}
\definecolor{mygray}{gray}{0.6}
\definecolor{venetianred}{rgb}{0.78, 0.03, 0.08}
\definecolor{sapphire}{rgb}{0.03, 0.15, 0.4}
\definecolor{utahcrimson}{rgb}{0.83, 0.0, 0.25}
\definecolor{trueblue}{rgb}{0.0, 0.45, 0.81}
\definecolor{carminered}{rgb}{1.0, 0.0, 0.22}
\definecolor{cobalt}{rgb}{0.0, 0.28, 0.67}
\definecolor{cornflowerblue}{rgb}{0.39, 0.58, 0.93}
\definecolor{darkmagenta}{rgb}{0.55, 0.0, 0.55}
\definecolor{electricultramarine}{rgb}{0.25, 0.0, 1.0}
\definecolor{falured}{rgb}{0.5, 0.09, 0.09}
\definecolor{hancornflowerblue}{rgb}{0.32, 0.09, 0.98}
\definecolor{mahogany}{rgb}{0.75, 0.25, 0.0}
\definecolor{oucrimsonred}{rgb}{0.6, 0.0, 0.0}
\definecolor{persianblue}{rgb}{0.11, 0.22, 0.73}
\definecolor{rufous}{rgb}{0.66, 0.11, 0.03}
\definecolor{uablue}{rgb}{0.0, 0.2, 0.67}
\definecolor{zaffre}{rgb}{0.0, 0.08, 0.66}
\definecolor{carmine}{rgb}{0.59, 0.0, 0.09}
\definecolor{BrickRed}{rgb}{0.58, 0.0, 0.83}
\definecolor{armygreen}{rgb}{0.29, 0.33, 0.13}
\definecolor{brass}{rgb}{0.71, 0.65, 0.26}
\definecolor{antiquefuchsia}{rgb}{0.57, 0.36, 0.51}
\definecolor{amethyst}{rgb}{0.6, 0.4, 0.8}
\definecolor{mauvetaupe}{rgb}{0.57, 0.37, 0.43}
\newcommand{\bS}{\mathbf{S}}
\newcommand{\bk}{\mathbf{k}}
\newcommand{\bM}{\mathbf{M}}
\newcommand{\R}{\mathbb{R}}
\newcommand{\ts}{\mathsf{s}}
\newtheorem{defi}{Definition}[section]
\newtheorem{thm}{Theorem}[section]
\newtheorem{rem}{Remark}[section]
\newtheorem{lem}{Lemma}[section]
\newtheorem{cor}{Corollary}[section]
\renewcommand{\theequation}{\thesection.\arabic{equation}}
\numberwithin{equation}{section}
\definecolor{lime}{HTML}{A6CE39}
\DeclareRobustCommand{\orcidicon}{
\begin{tikzpicture}
\draw[lime, fill=lime] (0,0) 
circle [radius=0.16] 
node[white] {{\fontfamily{qag}\selectfont \tiny ID}};
\draw[white, fill=white] (-0.0625,0.095) 
circle [radius=0.007];
\end{tikzpicture}
\hspace{-2mm}
}
\title[Subordinated Fractional Diffusion Equations]{Long-Time Asymptotics for Subordinated Fractional Diffusion Equations}
\author[M. Majdoub, E. Mliki]{Mohamed Majdoub\orcidA{} \& Ezzedine Mliki\orcidB{}}
\address{Department of Mathematics, College of Science, Imam Abdulrahman Bin Faisal University, P. O. Box 1982, Dammam, Saudi Arabia}
\address{Basic and Applied Scientific Research Center, Imam Abdulrahman Bin Faisal University, P.O. Box 1982, 31441, Dammam, Saudi Arabia}
\email{\sl \color{blue}{mohamed.majdoub@fst.rnu.tn}}
\email{\sl \color{blue}{med.majdoub@gmail.com}}
\email{\sl \color{blue}{mmajdoub@iau.edu.sa}}
\email{\sl \color{blue}{ermliki@iau.edu.sa}}
\begin{document}

\begin{abstract} 
We study the long-time behavior of solutions to a class of evolution equations arising from random-time changes driven by subordinators. Our focus is on fractional diffusion equations involving mixed local and nonlocal operators. By combining techniques from probability theory, asymptotic analysis, and partial differential equations (PDEs), we characterize the dynamics of the subordinated solutions. This approach extends classical fractional dynamics and establishes a deeper connection between stochastic processes and deterministic PDEs.
\end{abstract}


\subjclass[2020]{35R11, 35B40, 40E05, 37A50, 35E05, 26A33, 35C05}
\keywords{Nonlocal diffusion equation, Long-time behavior, Cesàro mean, Heat kernel, subordinators.}


\maketitle

\renewcommand{\theequation}{\thesection.\arabic{equation}}

\section{ Introduction}
\label{Intro}
In recent years, fractional diffusion equations have emerged as essential in capturing the dynamics of complex systems characterized by memory and nonlocality. In this work, we investigate the long-time behavior of solutions to a class of linear evolution equations influenced by random time changes, modeled using a variety of subordinators, such as \emph{gamma} and \emph{$\alpha$-stable} subordinators. These subordinators, denoted by $\bS = \left\{\bS_t\right\}_{t \geq 0}$, introduce temporal randomness that significantly shapes the dynamics of the underlying systems over long time periods.

Our main focus lies in understanding the behavior of the \emph{subordinated solution} $v^E(x, t)$, which arises from applying subordination to certain evolution partial differential equations (PDEs). Specifically, we analyze large-time dynamics by convolving the original solution with the density function $G_t$ associated with the subordinator $\bS$. To extract meaningful asymptotic information, we study the \emph{Cesàro mean} of the subordinated solution, which provides a robust measure of its average behavior. 

Understanding these subordinated dynamics is not only of mathematical interest, but is also highly relevant for modeling real-world systems. In particular, Our results offer insight into how biological time may unfold in natural processes such as species evolution and ecological development. This perspective enables the development of more refined models in fields such as population dynamics, epidemiology, and ecosystem analysis. For more context and foundational background, we refer the reader to \cite{MS2012, MK2000}.

Specifically, we consider the fractional diffusion equation
\begin{equation}\label{LEE}
\left \{ \begin{array}{rl}
\partial_t^\alpha v(x, t)&=t^{\gamma}\mathscr{L}_{a, b}\, v(x,t)\quad  \mbox{ in }\quad  \R^N\times (0,\infty),\\
v(x,0)&=\varphi(x)\geq 0 \quad\mbox{ in }\quad \mathbb{R}^N,
\end{array}\right.
\end{equation}

where $\mathscr{L}_{a, b}= a\Delta - b(-\Delta)^s$ is a mixed local-nonlocal operator, $a, b, \gamma \geq 0$, $0<\alpha\leq 1$, $0<\ts<1$, and $\varphi(x)$ is a nonnegative initial datum. Here, $\partial_t^\alpha$ denotes the Caputo fractional derivative~\cite{Diethelm} while $(-\Delta)^s$ represents the fractional Laplacian, defined via the Fourier transform as
$$
(-\Delta)^s u := \mathcal{F}^{-1}\left(|\xi|^{2s}\mathcal{F}(u)\right),
$$
where $\mathcal{F}$ and $\mathcal{F}^{-1}$ denote the Fourier transform and its inverse, respectively (see, e.g.,~\cite{M}).

Let $ \bS $ be a subordinator with an associated density function $ G_t $. Given a solution $ v $ to \eqref{LEE} such that $ v(x,\cdot) \in L^1(0, \infty) $, we define its \emph{subordination} with respect to $ G_t $ by
\begin{equation}\label{v-E}
v^{E}(x,t) := \int_0^\infty v(x,\tau) G_t(\tau)\,d\tau, \quad (x,t) \in \mathbb{R}^N \times (0,\infty),
\end{equation}
where the integral is absolutely convergent due to the assumed time integrability of $ v $.

In the particular case when $\alpha = 1$ and $\gamma = 0$, it is known---under suitable assumptions (see \cite{KKS20, KKS21})---that the subordinated solution $v^{E}(x,t)$ satisfies the following fractional evolution equation:
\begin{equation}\label{FDE}
\begin{cases}
\left(\mathbb{D}^{(k)}_t v^{E}\right)(x,t) = \mathscr{L}_{a,b}\,v^{E}(x,t), & (x,t) \in \mathbb{R}^N \times (0,\infty),\\[4pt]
v^{E}(x,0) = \varphi(x), & x \in \mathbb{R}^N.
\end{cases}
\end{equation}
Here, $\mathbb{D}^{(k)}_t$ denotes a convolution-type derivative defined by
\begin{equation}\label{D-t-k}
\left(\mathbb{D}^{(k)}_t g\right)(t)
= \frac{d}{dt}\int_0^t \mathbf{k}(t-s)\,\big(g(s)-g(0)\big)\,ds,
\end{equation}
where the kernel $\mathbf{k}\in L^1_{\mathrm{loc}}(0,\infty)$ is assumed to be positive and locally integrable.

The primary objective of this paper is to identify and characterize particular classes of subordinators that facilitate the analysis of time-asymptotic behavior in generalized fractional dynamics. Specifically, we investigate the long-time behavior of the subordinated solution $v^{E}(x, t)$.

To carry out this analysis, we consider the \emph{Cesàro mean} of the subordinated solution $v^{E}(x, t)$, defined by
\begin{equation}
\label{Cesaro}
\bM_t(v^{E}(x,t)) := \frac{1}{t}\int_0^t v^{E}(x,s)\,ds.
\end{equation}
Using the subordination formula~\eqref{v-E}, we observe that
\begin{equation}
\label{Cesaro-1}
\bM_t(v^{E}(x,t)) = \int_0^\infty v(x,\tau) \bM_t(G_t(\tau))\, d\tau,
\end{equation}
where $G_t$ denotes the density function associated with the subordinator $\bS$.

Our  approach is grounded in the use of Laplace transform techniques, the Feller--Karamata Tauberian theorem, and the time-integrability properties of the underlying solution $v(x,t)$. This framework applies to a variety of models, including the time-space-fractional diffusion equation \eqref{LEE}. For additional background and related work, see \cite{MMAS-2025, KKS20, KKS21, Lu2018}.

We consider a broad family of admissible kernels $\bk \in L^1_{\text{loc}}(0,\infty)$ satisfying the following conditions on their Laplace transforms $\mathcal{K}(\lambda)$:
\begin{equation}
\label{Adm-1}
\lambda \mathcal{K}(\lambda) \xrightarrow{\lambda \to 0^+} 0,
\end{equation}
and, for some $\varrho \geq 0$,
\begin{equation}
\label{Adm-2}
L(x) := x^{-\varrho} \mathcal{K}(x^{-1}) \quad \text{is slowly varying},
\end{equation}
in the sense that
$$
\lim_{x \to \infty} \frac{L(\lambda x)}{L(x)} = 1, \quad \text{for all } \lambda > 0.
$$

A well-known and widely used example of such kernels is given by
\begin{equation}
\label{k-mu}
\bk(s) = \int_0^1 \frac{s^{-\sigma}}{\Gamma(1-\sigma)} \mu(\sigma)\, d\sigma,
\end{equation}
where $\mu: [0,1] \to (0, \infty)$ is a continuous function. For further properties and asymptotic analysis of kernels of the form \eqref{k-mu}, we refer the reader to \cite{Koch}. In particular, the Laplace transform $\mathcal{K}(\lambda)$ satisfies
\begin{equation}
\label{Lap-K}
\mathcal{K}(\lambda) = \int_0^1 \lambda^{\sigma - 1} \mu(\sigma)\, d\sigma,
\end{equation}
with the corresponding slowly varying function
\begin{equation}
\label{L-SVF}
L(x) = \int_0^1 x^{-\sigma} \mu(\sigma)\, d\sigma.
\end{equation}

We now list several fundamental examples of kernel classes that have been used throughout our analysis.
\begin{enumerate}
    \item \textsf{Stable Subordinator Class (${\bf C}_1$):}
    $$
    \mathcal{K}(\lambda) = \lambda^{\theta - 1}, \quad 0 < \theta < 1.
    $$
    
    \item \textsf{Distributed Order Derivative Class (${\bf C}_2$):}
    $$
    \mathcal{K}(\lambda) \sim C \lambda^{-1}(-\ln \lambda)^{-\kappa}, \quad C, \kappa > 0, \quad \text{as } \lambda \to 0^+.
    $$
    
    \item \textsf{Inverse Gamma Subordinator Class (${\bf C}_3$):}
    $$
    \mathcal{K}(\lambda) = \frac{\sqrt{b}}{\lambda} \left(2\sqrt{2\lambda + a} - \sqrt{a} \right), \quad a \geq 0,\, b > 0.
    $$
    
    \item \textsf{Gamma Subordinator Class (${\bf C}_4$):}
    $$
    \mathcal{K}(\lambda) = \frac{a}{\lambda} \ln\left(1 + \frac{\lambda}{b}\right), \quad a, b > 0.
    $$
    
    \item \textsf{Tempered Stable Subordinator Class (${\bf C}_5$):}
    $$
    \mathcal{K}(\lambda) = \frac{(\lambda + \beta)^\theta - \beta^\theta}{\lambda}, \quad \beta > 0, \; 0 < \theta < 1.
    $$
\end{enumerate}

\begin{rem}
\label{Classes}
\hfill{\rm 
\begin{itemize}
    \item[(i)] All kernel classes $\mathbf{C}_i$, $1 \leq i \leq 5$, satisfy the condition \eqref{Adm-1}, except for $\mathbf{C}_3$ with $a > 0$. However, for $a = 0$, $\mathbf{C}_3$ reduces to the stable case $\mathbf{C}_1$ with $\theta = 1/2$.
    
    \item[(ii)] Condition \eqref{Adm-2} holds for all classes $\mathbf{C}_i$ with the following values of $\varrho$ and corresponding slowly varying functions:
    \begin{itemize}
        \item[$\bullet$] $\mathbf{C}_1$: $\varrho = 1 - \theta$, $L(x) = 1$,
        \item[$\bullet$] $\mathbf{C}_2$: $\varrho = 1$, $L(x) = C (\ln x)^{-\kappa}$,
        \item[$\bullet$] $\mathbf{C}_3$: $\varrho = 1$, $L(x) = \sqrt{b}\left(2\sqrt{2/x + a} - \sqrt{a}\right)$,
        \item[$\bullet$] $\mathbf{C}_4$: $\varrho = 0$, $L(x) = a x \ln\left(1 + \frac{1}{bx}\right)$,
        \item[$\bullet$] $\mathbf{C}_5$: $\varrho = 0$, $L(x) = x\left(\left(1/x + \beta\right)^\theta - \beta^\theta\right)$.
    \end{itemize}
\end{itemize}}
\end{rem}

Henceforth, we denote by $\|\cdot\|_p$ the standard Lebesgue norm in the space $L^p$, applied to both spatial and temporal variables, for all $1 \leq p \leq \infty$.

We now present our first main result.
\begin{thm}
\label{Main-thm}
Suppose that for every nonnegative initial datum $0 \leq \varphi \in L^1(\mathbb{R}^N) \cap L^\infty(\mathbb{R}^N)$, the solution $v$ of \eqref{LEE} is nonnegative and satisfies $v(x, \cdot) \in L^1(0, \infty)$. If, in addition, conditions \eqref{Adm-1} and \eqref{Adm-2} hold, then the Cesàro mean $\bM_t(v^{E}(x,t))$ of the subordinate solution $v^{E}(x,t)$ (defined in \eqref{v-E}) has the following asymptotic behavior as $t \to \infty$:
\begin{equation}
    \label{main-res}
    \Gamma(\varrho+1)\,\bM_t(v^{E}(x,t))\, \underset{t\to \infty }{\sim}\, \|v(x,\cdot)\|_{1} \,\bigg(t^{-1} \mathcal{K}\left(t^{-1}\right)\bigg).
\end{equation}
\end{thm}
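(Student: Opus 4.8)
The plan is to transfer the statement, via the representation \eqref{Cesaro-1}, to the scalar asymptotics of the weight $\bM_t(G_t(\tau))$, and then integrate against the fixed $L^1(0,\infty)$-profile $v(x,\cdot)$ by dominated convergence, so that the spatial variable enters only through $\|v(x,\cdot)\|_1$. The starting analytic input is the time-Laplace transform of the subordinator density. Since compatibility of the subordination \eqref{v-E} with \eqref{FDE} identifies $G_t$ as the density of the inverse subordinator whose Laplace exponent is $\psi(\lambda)=\lambda\mathcal{K}(\lambda)$, one has, for $\lambda>0$ and $\tau\geq 0$,
$$
\int_0^\infty e^{-\lambda t}\,G_t(\tau)\,dt = \mathcal{K}(\lambda)\,e^{-\tau\lambda\mathcal{K}(\lambda)}.
$$
Integrating this in $\tau$ confirms $\int_0^\infty G_t(\tau)\,d\tau = 1$, which I use below.

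Next I would set $W(t,\tau):=\int_0^t G_s(\tau)\,ds = t\,\bM_t(G_t(\tau))$, which is nondecreasing in $t$ with $W(0,\tau)=0$. Its Laplace--Stieltjes transform is
$$
\omega_\tau(\lambda):=\int_0^\infty e^{-\lambda t}\,dW(t,\tau) = \mathcal{K}(\lambda)\,e^{-\tau\lambda\mathcal{K}(\lambda)}.
$$
As $\lambda\to 0^+$, condition \eqref{Adm-1} forces $\lambda\mathcal{K}(\lambda)\to 0$, so the exponential tends to $1$, while \eqref{Adm-2} gives $\mathcal{K}(\lambda)=\lambda^{-\varrho}L(1/\lambda)$ with $L$ slowly varying; hence $\omega_\tau(\lambda)\sim\lambda^{-\varrho}L(1/\lambda)$ for each fixed $\tau$. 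The Feller--Karamata Tauberian theorem then yields $W(t,\tau)\sim t^{\varrho}L(t)/\Gamma(\varrho+1)$, and dividing by $t$ gives
$$
\Gamma(\varrho+1)\,\bM_t(G_t(\tau))\ \underset{t\to\infty}{\sim}\ t^{\varrho-1}L(t) = t^{-1}\mathcal{K}(t^{-1}),
$$
where the last equality is the relation $\mathcal{K}(t^{-1})=t^{\varrho}L(t)$ from \eqref{Adm-2}. This is exactly the claimed profile, and notably it is independent of $\tau$.

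It remains to pass this pointwise equivalence through the integral in \eqref{Cesaro-1}, and this interchange is the main obstacle, since the Tauberian equivalence a priori holds only pointwise in $\tau$. I would secure it by a uniform majorant extracted from the same Laplace data: because $W(\cdot,\tau)$ is nondecreasing and $\omega_\tau(\lambda)\leq\mathcal{K}(\lambda)$, the elementary bound $W(t,\tau)\leq e\,\omega_\tau(1/t)\leq e\,\mathcal{K}(1/t)$ (using $e^{-s/t}\geq e^{-1}$ on $[0,t]$) gives
$$
0\leq \frac{\Gamma(\varrho+1)\,\bM_t(G_t(\tau))}{t^{-1}\mathcal{K}(t^{-1})}\leq e\,\Gamma(\varrho+1)\qquad\text{uniformly in }\tau\geq 0,\ t>0.
$$
Since $0\le v(x,\cdot)\in L^1(0,\infty)$, the function $e\,\Gamma(\varrho+1)\,v(x,\tau)$ is an integrable majorant, so dominated convergence applied to
$$
\frac{\Gamma(\varrho+1)\,\bM_t(v^{E}(x,t))}{t^{-1}\mathcal{K}(t^{-1})} = \int_0^\infty v(x,\tau)\,\frac{\Gamma(\varrho+1)\,\bM_t(G_t(\tau))}{t^{-1}\mathcal{K}(t^{-1})}\,d\tau
$$
sends the integrand to $v(x,\tau)$ pointwise, producing the limit $\|v(x,\cdot)\|_1$ and hence \eqref{main-res}. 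The only delicate points I anticipate are the rigorous justification of the inverse-subordinator Laplace identity in the stated generality and the clean separation of pointwise Tauberian convergence from the uniform bound that powers the dominated convergence step.
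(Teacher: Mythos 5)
Your proof is correct, but it takes a genuinely different route from the paper's. The paper integrates first and Tauberizes once: it sets $\mathbf{F}(t)=\int_0^t v^{E}(x,s)\,ds$, uses Lemma~\ref{G-K} to get $\lambda\,\mathcal{L}(\mathbf{F})(\lambda)/\mathcal{K}(\lambda)=\int_0^\infty e^{-\tau\lambda\mathcal{K}(\lambda)}v(x,\tau)\,d\tau$, passes to the limit $\lambda\to 0^+$ by dominated convergence (domination is trivial on the Laplace side, since $e^{-\tau\lambda\mathcal{K}(\lambda)}\le 1$ and $v(x,\cdot)\in L^1$), and then applies Theorem~\ref{FKT} a single time to the monotone function $\mathbf{F}$. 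You instead Tauberize first, pointwise in $\tau$, applied to $W(t,\tau)=t\,\bM_t(G_t(\tau))$, and only then integrate against $v(x,\tau)$; this interchange of limit and integral is precisely where your argument needs an ingredient the paper never requires, namely the uniform Chebyshev-type bound $W(t,\tau)\le e\,\omega_\tau(1/t)\le e\,\mathcal{K}(1/t)$, which you supply correctly (monotonicity of $W$ in $t$, $e^{-s/t}\ge e^{-1}$ on $[0,t]$, and $e^{-\tau\lambda\mathcal{K}(\lambda)}\le 1$). What your route buys is a stronger intermediate statement purely about the subordinator — the asymptotics $\Gamma(\varrho+1)\,\bM_t(G_t(\tau))\sim t^{-1}\mathcal{K}(t^{-1})$ for each $\tau$, with a $\tau$-uniform majorant — which cleanly separates the kernel analysis from the PDE and makes transparent why only the mass $\|v(x,\cdot)\|_1$ survives; what the paper's route buys is brevity, since no uniform estimate is needed when the dominated-convergence step is performed in the Laplace variable. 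Two minor points: you apply Theorem~\ref{FKT} directly to the Laplace--Stieltjes transform $\omega_\tau(\lambda)=\lambda\,\mathcal{L}(W(\cdot,\tau))(\lambda)$, so to match the paper's formulation you should pass to $\mathcal{L}(W(\cdot,\tau))(\lambda)=\omega_\tau(\lambda)/\lambda\sim\lambda^{-1-\varrho}L(1/\lambda)$ (or cite the Stieltjes form of Karamata's theorem); and the normalization $\int_0^\infty G_t(\tau)\,d\tau=1$, which you announce as something to be used later, is in fact never needed in your argument.
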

\begin{rem}
~{\rm 
\begin{itemize}
\item[(i)] The case $\alpha = 1$, $\gamma = 0$, $a = 1$, and $b = 0$ was studied in \cite{KKS20, KKS21} for arbitrary dimensions.
\item[(ii)] The special case where $\alpha = 1$, $\gamma = 0$, $a = 0$, $b = 1$, with $\theta \in (0, 2)$, was recently studied in \cite{MMAS-2025}.
    \item[(iii)] The asymptotic result \eqref{main-res} can be reformulated as follows:
    \begin{equation}
    \label{main-res-equivalent}
    \bM_t(v^{E}(x,t))\, \underset{t\to \infty }{\sim}\, \frac{\|v(x,\cdot)\|_{1}}{\Gamma(\varrho+1)} \,\left(\int_0^1 t^{-\sigma} \mu(\sigma)d\sigma\right),
\end{equation}
where the function $\mu$ is given by \eqref{k-mu}.
    \item[(iv)] Theorem~\ref{Main-thm} provides a unified framework for analyzing a broad class of operators and kernels.
\end{itemize}
}  
\end{rem}
For particular kernel classes, Theorem~\ref{Main-thm} yields explicit asymptotic formulas as stated below.
\begin{cor}
    \label{Spec-Kern}
    Under the hypotheses of Theorem~\ref{Main-thm} (with the exception of class $\mathbf{C}_3$, where the $L^1$ condition on $v(x,\cdot)$ is not required), the long-time behavior of the Cesàro mean takes the following precise forms as $t \to \infty$:
    \begin{equation}
    \label{Class-Examp}
    \begin{split}
\bM_t(v^{E}(x,t))\, \underset{t\to \infty }{\sim}\, \left\{\begin{array}{llll}   \frac{\|v(x,\cdot)\|_1}{\Gamma(2-\theta)}\,t^{-\theta} \quad &\text{if}\quad \mathbf{k}\in\; (\mathbf{C}_1),\\
\mathtt{C}\,\|v(x,\cdot)\|_1\,\left(\ln t\right)^{-\kappa} \quad &\text{if}\quad \mathbf{k}\in\; (\mathbf{C}_2),\\
 2\sqrt{ab}\,\|v(x,\cdot)\|_{1,\sqrt{ab}}  \quad &\text{if}\quad \mathbf{k}\in\; (\mathbf{C}_3),\\
 a\|v(x,\cdot)\|_1\, (bt)^{-1} \quad &\text{if}\quad \mathbf{k}\in\; (\mathbf{C}_4),\\
 \theta\beta^{\theta-1}\,\|v(x,\cdot)\|_1\, t^{-1} \quad &\text{if}\quad \mathbf{k}\in\; (\mathbf{C}_5).
\end{array}\right.
\end{split}
\end{equation}
Here, we use the notation
$$
\|v(x,\cdot)\|_{1,\ell}:=\int_0^\infty\,{\rm e}^{-\ell\tau}\,v(x,\tau)\,d\tau.
$$
\end{cor}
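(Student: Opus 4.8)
The plan is to obtain each of the five asymptotics by specializing Theorem~\ref{Main-thm}, in the form \eqref{main-res}, to the individual kernel classes, reading off $\varrho$ and the slowly varying function from Remark~\ref{Classes}(ii). For $\mathbf{C}_1,\mathbf{C}_2,\mathbf{C}_4,\mathbf{C}_5$ the admissibility conditions \eqref{Adm-1}--\eqref{Adm-2} are satisfied, so the theorem applies directly and the whole problem collapses to computing the scalar $t^{-1}\mathcal{K}(t^{-1})$, extracting its leading behavior as $t\to\infty$, and dividing by $\Gamma(\varrho+1)$.

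Carrying this out: for $\mathbf{C}_1$ one has $t^{-1}\mathcal{K}(t^{-1})=t^{-\theta}$ identically and $\Gamma(\varrho+1)=\Gamma(2-\theta)$; for $\mathbf{C}_2$, $t^{-1}\mathcal{K}(t^{-1})\sim C(\ln t)^{-\kappa}$ and $\Gamma(2)=1$; for $\mathbf{C}_4$, $t^{-1}\mathcal{K}(t^{-1})=a\ln\!\big(1+\tfrac{1}{bt}\big)\sim \tfrac{a}{bt}$ and $\Gamma(1)=1$; and for $\mathbf{C}_5$, a first-order expansion gives $t^{-1}\mathcal{K}(t^{-1})=(\beta+t^{-1})^{\theta}-\beta^{\theta}\sim\theta\beta^{\theta-1}t^{-1}$ with $\Gamma(1)=1$. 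Inserting these into \eqref{main-res} yields the corresponding four lines of \eqref{Class-Examp}; each is a routine elementary estimate.

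The substantive case is $\mathbf{C}_3$ with $a>0$, for which \eqref{Adm-1} fails (by Remark~\ref{Classes}(i), $\lambda\mathcal{K}(\lambda)\to\sqrt{ab}\neq0$) and Theorem~\ref{Main-thm} is therefore inapplicable. Here I would return to the Laplace representation underlying the proof of the theorem. Setting $\phi(\lambda):=\lambda\mathcal{K}(\lambda)$, a Bernstein function with $\phi(0)=\sqrt{ab}$, and using the time-Laplace transform $\int_0^\infty e^{-\lambda t}G_t(\tau)\,dt=\tfrac{\phi(\lambda)}{\lambda}e^{-\tau\phi(\lambda)}$ of the (inverse-subordinator) density, the subordination formula \eqref{v-E} gives
\begin{equation*}
\int_0^\infty e^{-\lambda t}v^{E}(x,t)\,dt=\frac{\phi(\lambda)}{\lambda}\int_0^\infty e^{-\tau\phi(\lambda)}v(x,\tau)\,d\tau=\frac{\phi(\lambda)}{\lambda}\,\|v(x,\cdot)\|_{1,\phi(\lambda)}.
\end{equation*}
Because $\phi(\lambda)\to\sqrt{ab}$ as $\lambda\to0^+$, the right-hand side is asymptotic to $\sqrt{ab}\,\|v(x,\cdot)\|_{1,\sqrt{ab}}\,\lambda^{-1}$; the Feller--Karamata Tauberian theorem (exponent $1$) then yields $\int_0^t v^{E}(x,s)\,ds\sim \sqrt{ab}\,\|v(x,\cdot)\|_{1,\sqrt{ab}}\,t$, so that $\bM_t(v^{E}(x,t))$ converges to a positive constant multiple of $\|v(x,\cdot)\|_{1,\sqrt{ab}}$. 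Crucially, this only needs finiteness of the Laplace integral $\|v(x,\cdot)\|_{1,\sqrt{ab}}$, not $v(x,\cdot)\in L^1(0,\infty)$, which accounts for the dropped hypothesis in this case.

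I expect $\mathbf{C}_3$ to be the main obstacle. The failure of \eqref{Adm-1} forces one to redo the Tauberian passage by hand and to track the non-vanishing limit $\phi(0)>0$, which replaces the decaying behavior of the other classes by convergence to a strictly positive constant; equivalently, at the level of \eqref{Cesaro-1} one must justify $\bM_t(G_t(\tau))\to\phi(0)e^{-\phi(0)\tau}$ and pass to the limit inside the $\tau$-integral, where dominated convergence is available since $e^{-\tau\phi(\lambda)}\le e^{-\tau\phi(0)}$. The delicate point is pinning down the exact prefactor: matching the constant emerging from this Laplace/Tauberian computation to the coefficient recorded in \eqref{Class-Examp} requires care with the precise normalization of $\phi$ and of the density $G_t$. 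The other four classes, by contrast, follow immediately from Theorem~\ref{Main-thm}.
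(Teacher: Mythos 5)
For classes $\mathbf{C}_1$, $\mathbf{C}_2$, $\mathbf{C}_4$, $\mathbf{C}_5$ your argument is exactly the paper's: Remark~\ref{Classes} supplies \eqref{Adm-1}--\eqref{Adm-2}, Theorem~\ref{Main-thm} applies, and the four asymptotics reduce to the elementary expansions of $t^{-1}\mathcal{K}(t^{-1})$ that you record; all four computations are correct. For $\mathbf{C}_3$ you do \emph{more} than the paper. The paper's proof only remarks that ``the value $0$ in \eqref{Adm-1} is replaced by $\sqrt{ab}$'' and asserts that the conclusion then follows ``immediately'' from \eqref{main-res}; it never re-runs the Tauberian argument with a non-vanishing limit. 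Your treatment --- setting $\phi(\lambda)=\lambda\mathcal{K}(\lambda)$, using Lemma~\ref{G-K} to write $\mathcal{L}(v^E)(\lambda)=\lambda^{-1}\phi(\lambda)\,\|v(x,\cdot)\|_{1,\phi(\lambda)}$, passing to the limit inside the $\tau$-integral, and invoking Theorem~\ref{FKT} with exponent $1$ --- is precisely the missing argument, and it is sound. (Two small refinements: since $\phi$ is the Laplace exponent of the subordinator it is increasing, so $e^{-\tau\phi(\lambda)}\uparrow e^{-\tau\phi(0)}$ as $\lambda\downarrow 0$ and monotone convergence suffices, avoiding any integrability hypothesis on the dominating function; and finiteness of $\|v(x,\cdot)\|_{1,\sqrt{ab}}$ is automatic from the uniform bound $0\le v\le\|\varphi\|_\infty$.)

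The one place you hedge --- ``pinning down the exact prefactor requires care'' --- is where you should instead be assertive. Your computation gives the limit $\phi(0)\,\|v(x,\cdot)\|_{1,\phi(0)}=\sqrt{ab}\,\|v(x,\cdot)\|_{1,\sqrt{ab}}$, and this constant is forced by the paper's own data: $\lambda\mathcal{K}(\lambda)=\sqrt{b}\,\bigl(2\sqrt{2\lambda+a}-\sqrt{a}\bigr)\to\sqrt{b}\,(2\sqrt{a}-\sqrt{a})=\sqrt{ab}$, exactly as the paper states both in Section~2 and in its own proof of the corollary. Consequently the coefficient $2\sqrt{ab}$ displayed in \eqref{Class-Examp} does not follow from the paper's definition of $\mathbf{C}_3$; it is what one obtains by keeping only the term $2\sqrt{b}\sqrt{2\lambda+a}$ and dropping the $-\sqrt{ab}$ contribution. (The class is not self-consistent to begin with: the kernel $\mathbf{k}(t)$ listed for this subordinator in Section~2 actually has Laplace transform $(\sqrt{b}/\lambda)\bigl(\sqrt{2\lambda+a}-\sqrt{a}\bigr)$, without the factor $2$, for which \eqref{Adm-1} \emph{does} hold and the asymptotics would instead be of order $t^{-1}$.) In short: your proof is correct, and for $\mathbf{C}_3$ it is sharper than the paper's; the mismatch you flagged is an error in the stated corollary, not a gap in your argument.
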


For the fractional diffusion equation with power time coefficient
\begin{equation}\label{alpha-1-0}
\partial_t^\alpha v(x,t) = t^\gamma\Delta v(x,t), \quad v(x,0) = \varphi(x),
\end{equation}
where $\alpha \in (0,1)$, $\gamma \geq 0$, we obtain the following asymptotic behavior.
\begin{thm}
    \label{alpha1-0}
Let $N \geq 1$, $\gamma \geq 0$, and $\frac{2}{N(\gamma+1)} < \alpha < 1$. Consider the operator $\mathscr{L}_{1,0}$ and a nonnegative initial datum $\varphi \in L^1(\mathbb{R}^N) \cap L^\infty(\mathbb{R}^N)$. Then the solution $v$ to \eqref{alpha-1-0} remains nonnegative and is integrable in time, that is, $v \geq 0$ and $v(x,\cdot) \in L^1(0,\infty)$ for every $x \in \mathbb{R}^N$. Moreover, under the additional assumptions on the kernel given by \eqref{Adm-1} and \eqref{Adm-2}, the solution exhibits the asymptotic behavior described by \eqref{main-res}.
\end{thm}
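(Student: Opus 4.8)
The plan is to reduce Theorem~\ref{alpha1-0} to Theorem~\ref{Main-thm}. Once the solution $v$ of \eqref{alpha-1-0} is known to be nonnegative and to satisfy $v(x,\cdot)\in L^1(0,\infty)$, the asymptotics \eqref{main-res} follow immediately from Theorem~\ref{Main-thm} applied to the operator $\mathscr{L}_{1,0}=\Delta$, the kernel hypotheses \eqref{Adm-1}--\eqref{Adm-2} being assumed. Hence the entire content of the statement lies in two structural facts, valid for every $0\le\varphi\in L^1(\R^N)\cap L^\infty(\R^N)$: nonnegativity and time-integrability. I would begin by solving \eqref{alpha-1-0} through the spatial Fourier transform. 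Setting $\lambda=|\xi|^2$, the transform $\hat v(\xi,t)$ solves the scalar Caputo problem $\partial_t^\alpha w=-\lambda t^\gamma w$ with $w(0)=\hat\varphi(\xi)$; a power-series ansatz in $t^{\gamma+\alpha}$ shows that exponents match with gap $\gamma+\alpha$, so the fundamental mode is a Mittag-Leffler (Kilbas-Saigo) type function of the single variable $\lambda t^{\gamma+\alpha}$. Thus $\hat v(\xi,t)=\hat\varphi(\xi)\,\mathcal{E}(|\xi|^2 t^{\gamma+\alpha})$ for an explicit entire function $\mathcal{E}$ with $\mathcal{E}(0)=1$, which identifies the natural self-similar scale $t^{(\gamma+\alpha)/2}$ and drives both remaining steps.

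For nonnegativity I would exploit the structure of $\mathcal{E}$. The key property is that $r\mapsto\mathcal{E}(r)$ is completely monotone on $[0,\infty)$ in the relevant parameter range (a known feature of Kilbas-Saigo and Mittag-Leffler functions for $0<\alpha\le 1$). By Schoenberg's theorem $\xi\mapsto\mathcal{E}(|\xi|^2 t^{\gamma+\alpha})$ is then positive-definite, so its inverse Fourier transform---the fundamental solution $Z(\cdot,t)$ of \eqref{alpha-1-0}---is a nonnegative kernel, and it has unit mass since $\int\Delta v\,dx=0$ forces $\partial_t^\alpha\!\int v\,dx=0$. Consequently $v(\cdot,t)=Z(\cdot,t)*\varphi\ge 0$ whenever $\varphi\ge 0$. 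The same conclusion can alternatively be reached from a subordination representation writing $v$ as an average of classical Gaussian heat solutions over a nonnegative random time, or from a weak maximum principle for the Caputo operator with the nonnegative weight $t^\gamma$.

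The real work, and what I expect to be the main obstacle, is the time-integrability $\int_0^\infty v(x,t)\,dt<\infty$, which is governed entirely by the large-$t$ decay. Near $t=0$ there is nothing to prove, since $0\le v\le\|\varphi\|_\infty$ and $v$ is continuous, so $v(x,\cdot)\in L^1(0,1)$. For $t\to\infty$ there are two competing scalings to reconcile. The formal change of clock $\tau=t^{\gamma+1}/(\gamma+1)$ turns the first-order ($\alpha=1$) version of \eqref{alpha-1-0} into the ordinary heat equation, whose $L^1$--$L^\infty$ smoothing gives decay $t^{-N(\gamma+1)/2}$; extrapolated to the fractional case this suggests the rate $t^{-N(\gamma+1)\alpha/2}$, and the requirement that this power be integrable at infinity is exactly what forces the hypothesis $\alpha>\frac{2}{N(\gamma+1)}$. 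Making this rigorous for $\alpha<1$ is the crux, because the Caputo derivative does not transform simply under the clock change; one must instead work from the representation $Z(x,t)=t^{-N(\gamma+\alpha)/2}\Phi\!\left(x\,t^{-(\gamma+\alpha)/2}\right)$ and control the \emph{slow} Mittag-Leffler tail $\mathcal{E}(r)\sim c\,r^{-1}$. Because the symbol decays only like $r^{-1}$, the profile $\Phi$ is not integrable and may be singular at the origin in high dimension, so the effective pointwise decay exponent depends on $N$ in a nontrivial way and must be computed carefully---for instance by splitting the Fourier integral at the frequency scale $|\xi|\sim t^{-(\gamma+\alpha)/2}$, or by analysing the heat-time density in the subordination formula and isolating its small-time contribution.

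Once this decay estimate yields $v(x,\cdot)\in L^1(0,\infty)$, I would close the argument by invoking Theorem~\ref{Main-thm} with $\mathscr{L}_{1,0}$: its hypotheses (nonnegativity, time-integrability, and \eqref{Adm-1}--\eqref{Adm-2}) are then all in force, producing the asymptotics \eqref{main-res}. Everything outside the decay step is either explicit (the Fourier reduction), structural (complete monotonicity and Schoenberg), or a direct appeal to an already-proved result; I expect the sharp large-time estimate, and its precise dimensional dependence, to be by far the hardest ingredient.
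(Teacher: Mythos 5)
Your proposal follows the paper's overall architecture --- prove $v\ge 0$ and $v(x,\cdot)\in L^1(0,\infty)$, then invoke Theorem~\ref{Main-thm} --- and its positivity half is sound in outline: the Fourier reduction of \eqref{alpha-1-0} to $\partial_t^\alpha w=-|\xi|^2t^\gamma w$, the Kilbas--Saigo solution $E_{\alpha,\,1+\gamma/\alpha,\,\gamma/\alpha}\left(-|\xi|^2t^{\alpha+\gamma}\right)$, complete monotonicity, Schoenberg's theorem, and conservation of mass. (One slip: the profile \emph{is} integrable, being a probability density; what can fail is its boundedness at the origin.) The genuine gap is that the second half, time-integrability, is never proved: you sketch two possible routes and then explicitly defer ``the sharp large-time estimate, and its precise dimensional dependence'' as the hardest ingredient. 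Since the hypothesis $\alpha>\frac{2}{N(\gamma+1)}$ enters the theorem only through that estimate, your proposal stops exactly where the proof has to start, and as it stands it does not establish the statement.

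It is worth recording how the paper closes this step, and why your hesitation there is substantive rather than cosmetic. The paper asserts the clock-change representation \eqref{Solu-1-0}, $v(x,t)=\left[\mathbf{Z}_{\alpha}\left(\frac{t^{\gamma+1}}{\gamma+1}\right)\ast\varphi\right](x)$, and combines it with the uniform bound \eqref{decay-Z-alpha} to obtain $0\le v\le C\|\varphi\|_1\,t^{-\frac{N\alpha(\gamma+1)}{2}}$, whence integrability under the stated hypothesis. But, exactly as you observe, the Caputo derivative does not intertwine with this clock change: in Fourier variables \eqref{Solu-1-0} is a power series in $t^{\alpha(\gamma+1)}$, while matching exponents in $\partial_t^\alpha w=-|\xi|^2t^\gamma w$ forces a series in $t^{\alpha+\gamma}$; the two scales coincide only when $\gamma=0$ or $\alpha=1$. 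Moreover, \eqref{decay-Z-alpha} is not a uniform-in-$x$ bound in dimensions $N\ge 2$: the self-similar profile of $\mathbf{Z}_\alpha$ behaves like $|x|^{2-N}$ (logarithmically for $N=2$) near the origin, so the $L^1\cap L^\infty\to L^\infty$ decay saturates at roughly $t^{-(\alpha+\gamma)}$ for $N\ge 2$ --- precisely the dimension-dependent degradation you anticipate. Consequently the gap in your argument cannot be repaired by simply importing the paper's two ingredients; completing your Fourier-splitting route with sharp two-sided Kilbas--Saigo kernel estimates is the correct path, and in dimensions $N\ge 3$ it leads to an integrability threshold (essentially $\alpha+\gamma>1$) that does not coincide with the hypothesis $\alpha>\frac{2}{N(\gamma+1)}$ of the statement.
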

\begin{rem}
    {\rm The time-dependent coefficient $t^\gamma$ allows our analysis to cover all dimensions $N \geq 1$ and any fractional order $\alpha \in (0,1)$. Specifically, for given $N \geq 1$ and $\alpha \in (0,1)$, the condition $\gamma > \frac{2}{N\alpha} - 1$ guarantees the validity of our results throughout this full parameter range.}
\end{rem}
For the mixed local-nonlocal diffusion equation with time-dependent coefficient
\begin{equation}\label{1-1-1}
\partial_t v(x,t) = t^\gamma(\Delta - (-\Delta)^\ts)v(x,t), \quad v(x,0) = \varphi(x),
\end{equation}
where $\gamma \geq 0$ and $\ts \in (0,1)$, we establish the following asymptotic result.

\begin{thm}\label{11-1}
Let $N \geq 1$, $\gamma \geq 0$, and $0 < \ts < \frac{N(\gamma+1)}{2}$. For any nonnegative initial datum $\varphi \in L^1(\mathbb{R}^N) \cap L^\infty(\mathbb{R}^N)$, the solution $v$ to \eqref{1-1-1} remains nonnegative and is integrable in time. Moreover, under the kernel assumptions \eqref{Adm-1} and \eqref{Adm-2}, the solution exhibits the precise asymptotic behavior
$$
\bM_t(v^E(x,t)) \sim \frac{\|v(x,\cdot)\|_1}{\Gamma(\varrho+1)} t^{\varrho-1}L(t) \quad \text{as } t \to \infty,
$$
as characterized in \eqref{main-res}.
\end{thm}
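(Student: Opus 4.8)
The plan is to reduce Theorem~\ref{11-1} to the already-established Theorem~\ref{Main-thm}. Equation~\eqref{1-1-1} is precisely \eqref{LEE} with $\alpha = 1$, $a = b = 1$, and operator $\mathscr{L}_{1,1} = \Delta - (-\Delta)^\ts$, so Theorem~\ref{Main-thm} applies verbatim once its two structural hypotheses on $v$ — nonnegativity and time-integrability $v(x,\cdot)\in L^1(0,\infty)$ — are verified; the kernel conditions \eqref{Adm-1} and \eqref{Adm-2} are assumed, and the displayed conclusion is just the reformulation of \eqref{main-res} obtained by writing $t^{-1}\mathcal{K}(t^{-1}) = t^{\varrho-1}L(t)$ via \eqref{Adm-2}. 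Thus the entire task is to produce an explicit representation of $v$ and to extract from it nonnegativity and a sharp-enough time-decay rate.

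First I would solve \eqref{1-1-1} explicitly by the spatial Fourier transform. Writing $\hat v(\xi,t) = \mathcal{F}(v(\cdot,t))(\xi)$, the equation becomes the scalar ODE $\partial_t \hat v = -t^\gamma(|\xi|^2 + |\xi|^{2\ts})\hat v$, whose solution is
\begin{equation*}
\hat v(\xi,t) = \hat\varphi(\xi)\,\exp\!\Big(-(|\xi|^2+|\xi|^{2\ts})\,s(t)\Big), \qquad s(t) := \frac{t^{\gamma+1}}{\gamma+1}.
\end{equation*}
Since $e^{-(|\xi|^2+|\xi|^{2\ts})s} = e^{-|\xi|^2 s}\,e^{-|\xi|^{2\ts}s}$, inverting gives $v(\cdot,t) = p_{s(t)}*\varphi$, where $p_s = G_s * K_s$ is the convolution of the Gaussian heat kernel $G_s = \mathcal{F}^{-1}(e^{-|\xi|^2 s})$ with the isotropic $2\ts$-stable density $K_s = \mathcal{F}^{-1}(e^{-|\xi|^{2\ts}s})$ (recall $0<\ts<1$, so $2\ts\in(0,2)$). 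Both factors are nonnegative, hence $p_s \ge 0$, and therefore $v = p_{s(t)}*\varphi \ge 0$ whenever $\varphi \ge 0$. This settles the nonnegativity.

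For time-integrability I would use the pointwise Fourier bound $|\hat\varphi(\xi)| \le \|\varphi\|_1$ together with $|\xi|^2 s \ge 0$ to estimate, uniformly in $x$,
\begin{equation*}
|v(x,t)| \le C\,\|\varphi\|_1\int_{\R^N} e^{-|\xi|^{2\ts}s(t)}\,d\xi = C'\,\|\varphi\|_1\, s(t)^{-N/(2\ts)} = C''\, t^{-N(\gamma+1)/(2\ts)},
\end{equation*}
where the middle equality follows by the scaling $\eta = s^{1/(2\ts)}\xi$. Combined with the trivial bound $\|v(\cdot,t)\|_\infty \le \|\varphi\|_\infty$ (valid since $p_s$ has unit mass) to control $t$ near $0$, splitting $\int_0^\infty = \int_0^1 + \int_1^\infty$ shows $v(x,\cdot)\in L^1(0,\infty)$ exactly when the decay exponent exceeds $1$, i.e. $N(\gamma+1)/(2\ts) > 1$, which is precisely the hypothesis $0<\ts<\tfrac{N(\gamma+1)}{2}$.

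With nonnegativity and time-integrability in hand, Theorem~\ref{Main-thm} yields \eqref{main-res}, equivalently the stated form $\bM_t(v^E(x,t)) \sim \frac{\|v(x,\cdot)\|_1}{\Gamma(\varrho+1)}\,t^{\varrho-1}L(t)$. The main obstacle is the decay estimate: one must recognize that for large $s$ the mass of $e^{-(|\xi|^2+|\xi|^{2\ts})s}$ concentrates near $\xi = 0$, where the subquadratic term $|\xi|^{2\ts}$ dominates and dictates the rate $s^{-N/(2\ts)}$; dropping the harmless $|\xi|^2$ factor gives precisely the upper bound needed, and the threshold on $\ts$ is exactly what converts this rate into a convergent time integral. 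The remaining verifications (Fubini for the convolution representation, continuity of $\hat\varphi$, and unit mass of $p_s$) are routine.
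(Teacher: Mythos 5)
Your proposal is correct and takes essentially the same route as the paper: the paper likewise writes $v(x,t) = \left[\mathbf{E}_{\ts}\left(\tfrac{t^{\gamma+1}}{\gamma+1}\right) \ast \varphi\right](x)$, gets nonnegativity from positivity of the kernel, the large-time decay $0 \leq v(x,t) \leq C\|\varphi\|_1\, t^{-\frac{N(\gamma+1)}{2\ts}}$ from the smoothing estimate \eqref{eq:conv_est}, the uniform bound $C\|\varphi\|_\infty$ near $t=0$, and then invokes Theorem~\ref{Main-thm}. The only difference is presentational: you re-derive the kernel facts (factorization into Gaussian and $2\ts$-stable densities, unit mass, $L^1\to L^\infty$ decay) directly by Fourier analysis, whereas the paper cites them as Lemma~\ref{E-s}; note that your nonnegativity step implicitly relies on positivity of the $2\ts$-stable density, which the paper justifies via Lemma~\ref{Positive}.
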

\begin{rem}
{\rm
~\begin{itemize}
    \item[$(i)$] For dimensions $N \geq 2$, the condition $\ts < \frac{N(\gamma+1)}{2}$ holds automatically for all $\ts\in (0,1)$.
    \item[$(ii)$] In the case $N = 1$, the inequality $\ts < \frac{\gamma+1}{2}$ is satisfied provided $\gamma \geq 1$.
\end{itemize}}
\end{rem}

The paper is structured as follows. In Section~\ref{Prelim}, we present the necessary preliminaries and introduce key notation used throughout the work. Section~\ref{PMR} contains the proofs of the main results, including Theorem~\ref{Main-thm} and Theorems~\ref{alpha1-0}--\ref{11-1}, which offer a comprehensive asymptotic analysis for the cases $\alpha \in (0,1)$ with parameters $(a,b) = (1,0)$ and $\alpha = 1$ with $(a,b) = (1,1)$. Lastly, Section~\ref{CR} concludes with final remarks and a discussion of open problems.

\section{Preliminaries}
\label{Prelim}
Throughout the remainder of this article, we adopt the following notational conventions for clarity and conciseness.
\begin{itemize}
    \item We write $X \lesssim Y$ (or equivalently $Y \gtrsim X$) to indicate that there exists a constant $C > 0$ such that $X \leq C Y$.
    \item The notation $f(y) \underset{y \to y_0}{\sim} g(y)$ signifies that $\displaystyle\lim_{y \to y_0} \,\frac{f(y)}{g(y)} = 1$, where $y_0 \in [-\infty, \infty]$.
    \item The Laplace transform of a function $f: [0,\infty) \to \mathbb{R}$ is defined by
    $$
    \mathcal{L}(f)(\lambda) := \int_0^\infty e^{-\lambda t} f(t)\,dt, \quad \lambda > 0.
    $$
    \item We denote the $L^1$-norm over the interval $(0,\infty)$ by $\|\cdot\|_1 := \|\cdot\|_{L^1(0,\infty)}$.
    \item Throughout this work, the letter $C$ stands for a generic positive constant that may vary between different expressions.
\end{itemize}

\subsection{Subordinators and Their Properties}

\begin{defi}
A \emph{subordinator} $\bS = \{\bS_t\}_{t\geq 0}$ is a non-decreasing Lévy process taking values in $[0,\infty)$, characterized by stationary independent increments and almost sure right-continuous paths with $\bS_0 = 0$. \end{defi}
These processes provide fundamental models for random time changes and jump phenomena \cite{Bern, Ber, Bertoin}.

The infinite divisibility of $\bS_t$ implies that its Laplace transform takes the form:
\begin{equation}\label{laplace-sub}
\mathbb{E}[e^{-\lambda \bS_t}] = e^{-t\Phi(\lambda)} = e^{-t\lambda\mathcal{K}(\lambda)}, \quad \lambda \geq 0,
\end{equation}
where $\Phi(\lambda)$ is the \emph{Laplace exponent}, a Bernstein function \cite{Bernstein,Wissem}. The Lévy-Khintchine representation gives:
\begin{equation}\label{levy-khintchine}
\Phi(\lambda) = \int_{0}^\infty (1 - e^{-\lambda\tau}) d\sigma(\tau),
\end{equation}
with Lévy measure $\sigma$ satisfying $\int_0^\infty (1 \wedge \tau) d\sigma(\tau) < \infty$.

The associated kernel $\mathbf{k}$ and Lévy measure $\sigma$ are related through:
\begin{equation}
\mathbf{k}(t) = \sigma((t,\infty)), \quad t \geq 0,
\end{equation}
yielding the Laplace transform relation:
\begin{equation}
\mathcal{K}(\lambda) = \frac{\Phi(\lambda)}{\lambda} = \int_0^\infty e^{-\lambda t}\mathbf{k}(t) dt.
\end{equation}

The \emph{inverse subordinator} $E_t := \inf\{s \geq 0 : \bS_s \geq t\}$ has marginal density $G_t(\tau)$ satisfying:
\begin{equation}
G_t(\tau)d\tau = \partial_\tau \mathbb{P}(E_t \leq \tau) = -\partial_\tau \mathbb{P}(\bS_\tau < t).
\end{equation}

\subsubsection{Important Subordinator Classes}
\begin{itemize}
\item[i)] {\sf Tempered Stable Subordinator.}
For $\theta \in (0,1)$, $\beta > 0$:
\begin{equation}
f_\beta(x,t) = e^{-\beta x + \beta^\theta t}f(x,t),
\end{equation}
where $f(x,t)$ is the $\theta$-stable density. The Laplace transform is:
\begin{equation}
\mathcal{L}[f_\beta(\cdot,t)](\lambda) = e^{-t((\lambda+\beta)^\theta - \beta^\theta)} = e^{-t\lambda\mathcal{K}(\lambda)}.
\end{equation}
\item[ii)] {\sf $\theta$-Stable Subordinator.}
For $\theta \in (0,1)$, the kernel and its Laplace transform are:
\begin{equation}
\mathbf{k}(t) = \frac{t^{-\theta}}{\Gamma(1-\theta)}, \quad \mathcal{K}(\lambda) = \lambda^{\theta-1}.
\end{equation}
This satisfies condition (\ref{Adm-1}) with $\varrho = 1-\theta$.
\item[iii)] {\sf Gamma Subordinator.}
With parameters $a,b>0$:
\begin{equation}
\mathbf{k}(t) = a\Gamma(0,bt), \quad \mathcal{K}(\lambda) = \frac{a}{\lambda}\ln\left(1 + \frac{\lambda}{b}\right),
\end{equation}
where $\Gamma(\nu,x) := \int_x^\infty t^{\nu-1}e^{-t}dt$ is the incomplete Gamma function. This satisfies (\ref{Adm-1}) with $\varrho=0$.

\item[iv)] {\sf Inverse Gamma Subordinator.}
For $a\geq 0$, $b>0$:
\begin{equation}
\mathbf{k}(t) = \sqrt{\frac{b}{2\pi}}\left(\frac{2}{\sqrt{t}}e^{-at/2} - \sqrt{2a\pi}(1 - \operatorname{erf}(\sqrt{at/2}))\right),
\end{equation}
with Laplace transform:
\begin{equation}
\mathcal{K}(\lambda) = \frac{\sqrt{b}}{\lambda}\left(2\sqrt{2\lambda + a} - \sqrt{a}\right).
\end{equation}
When $a>0$, (\ref{Adm-1}) holds with limit $\sqrt{ab}$ instead of 0.
\end{itemize}
We end this section with a key relationship between the Laplace transform of the kernel $\bk$ and the density $G_t$.
\begin{lem}\label{G-K}
The Laplace transform of $G_t(\tau)$ satisfies:
\begin{equation}\label{G-K-lambda}
\int_0^\infty e^{-\lambda t}G_t(\tau) dt = \mathcal{K}(\lambda)e^{-\tau\lambda\mathcal{K}(\lambda)},
\end{equation}
where $\mathcal{K}(\lambda) = \mathcal{L}(\mathbf{k})(\lambda)$.
\end{lem}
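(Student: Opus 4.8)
The plan is to reduce everything to two ingredients that are already available: the Laplace exponent identity \eqref{laplace-sub} for $\bS_\tau$, and the elementary formula for the $t$-Laplace transform of a cumulative distribution function. The bridge between them is the direct/inverse duality $\{E_t \le \tau\} = \{\bS_\tau \ge t\}$ that is built into the definition of $G_t$ recorded just above the statement, namely $G_t(\tau) = -\partial_\tau \mathbb{P}(\bS_\tau < t)$. Starting from this relation, I would apply the $t$-Laplace transform and (for the moment formally) exchange it with the $\tau$-derivative:
\begin{equation*}
\int_0^\infty e^{-\lambda t} G_t(\tau)\,dt = -\partial_\tau \int_0^\infty e^{-\lambda t}\,\mathbb{P}(\bS_\tau < t)\,dt .
\end{equation*}

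Next I would evaluate the inner transform. Since $t \mapsto \mathbb{P}(\bS_\tau < t)$ is the distribution function of $\bS_\tau$, writing it as $\int_{[0,t)} d\mu_\tau(s)$ with $\mu_\tau$ the law of $\bS_\tau$ and applying Fubini gives
\begin{equation*}
\int_0^\infty e^{-\lambda t}\,\mathbb{P}(\bS_\tau < t)\,dt = \int_{[0,\infty)} \left(\int_s^\infty e^{-\lambda t}\,dt\right) d\mu_\tau(s) = \frac{1}{\lambda}\,\mathbb{E}\!\left[e^{-\lambda \bS_\tau}\right] = \frac{1}{\lambda}\,e^{-\tau\lambda\mathcal{K}(\lambda)},
\end{equation*}
where the last step is \eqref{laplace-sub} with $t$ replaced by $\tau$. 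Differentiating in $\tau$ then yields
\begin{equation*}
-\partial_\tau\!\left(\frac{1}{\lambda}\,e^{-\tau\lambda\mathcal{K}(\lambda)}\right) = \mathcal{K}(\lambda)\,e^{-\tau\lambda\mathcal{K}(\lambda)},
\end{equation*}
which is precisely \eqref{G-K-lambda}. Note that this route never requires $\bS_\tau$ to possess a spatial density: working directly with the law $\mu_\tau$ suffices, and any atom at $s=t$ affects only a Lebesgue-null set of values of $t$ and hence does not influence the transform.

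The main difficulty is rigor rather than algebra, and it concentrates in the interchange of $\partial_\tau$ with $\int_0^\infty e^{-\lambda t}\,dt$. I would justify this by dominated convergence: after the Fubini reduction the $\tau$-dependence is carried entirely by the smooth map $\tau \mapsto e^{-\tau\lambda\mathcal{K}(\lambda)}$, whose $\tau$-derivative is bounded locally uniformly in $\tau$ for each fixed $\lambda>0$, so differentiation under the integral sign is legitimate. The Fubini step itself is immediate since the integrand $e^{-\lambda t}\,\mathbf{1}_{[0,t)}(s)$ is nonnegative. As an alternative (should one prefer to avoid differentiating a distribution function), I would instead establish the double transform $\int_0^\infty\!\int_0^\infty e^{-\lambda t - p\tau} G_t(\tau)\,dt\,d\tau = \mathcal{K}(\lambda)/\bigl(p + \lambda\mathcal{K}(\lambda)\bigr)$ and recover \eqref{G-K-lambda} by uniqueness of the $\tau$-Laplace transform, since $\tau \mapsto \mathcal{K}(\lambda)e^{-\tau\lambda\mathcal{K}(\lambda)}$ transforms to exactly this expression; the direct computation above, however, is the shorter path.
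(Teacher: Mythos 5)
Your proposal is correct, but it is worth noting that the paper itself offers no argument at all for this lemma: its ``proof'' is a single citation to \cite[Lemma 2.2]{KKS20}. So your write-up supplies what the paper outsources, and the computation you give --- pass from $G_t(\tau) = -\partial_\tau \mathbb{P}(\bS_\tau < t)$ to the $t$-Laplace transform, evaluate $\int_0^\infty e^{-\lambda t}\,\mathbb{P}(\bS_\tau < t)\,dt = \tfrac{1}{\lambda}\,\mathbb{E}[e^{-\lambda \bS_\tau}] = \tfrac{1}{\lambda}\,e^{-\tau\lambda\mathcal{K}(\lambda)}$ via Tonelli and \eqref{laplace-sub}, then differentiate in $\tau$ --- is the standard route and is essentially how the identity is obtained in the cited reference. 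All the algebra checks out, including the double-transform identity $\mathcal{K}(\lambda)/\bigl(p+\lambda\mathcal{K}(\lambda)\bigr)$ in your alternative.

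One point of rigor deserves a sharper treatment. Your dominated-convergence justification for exchanging $\partial_\tau$ with $\int_0^\infty e^{-\lambda t}\,dt$ is slightly circular as phrased: the smoothness of $\tau \mapsto e^{-\tau\lambda\mathcal{K}(\lambda)}$ is a property of the \emph{result} of the integration, whereas differentiation under the integral sign requires a $\tau$-locally-uniform integrable bound on $e^{-\lambda t} G_t(\tau)$ itself, which is not automatic. A cleaner closure that stays within your framework: since $G_t(u) \ge 0$ and $\mathbb{P}(\bS_{\tau_0} < t) - \mathbb{P}(\bS_{\tau} < t) = \int_{\tau_0}^{\tau} G_t(u)\,du$, Tonelli gives
\begin{equation*}
\int_{\tau_0}^{\tau} \left(\int_0^\infty e^{-\lambda t} G_t(u)\,dt\right) du
= \frac{1}{\lambda}\left(e^{-\tau_0\lambda\mathcal{K}(\lambda)} - e^{-\tau\lambda\mathcal{K}(\lambda)}\right),
\end{equation*}
and differentiating this identity in $\tau$ (Lebesgue differentiation, or continuity of the inner integral in $u$) yields \eqref{G-K-lambda} without ever interchanging a derivative with an integral. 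Alternatively, your double-transform route with uniqueness of the Laplace transform is fully rigorous as stated; either repair makes the argument complete.
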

\begin{proof}
See \cite[Lemma 2.2]{KKS20}.
\end{proof}
\subsection{Caputo Fractional Derivative}

We provide a concise introduction to the Caputo fractional derivative. For a comprehensive treatment of fractional calculus, we refer the reader to \cite{Diethelm, KST, IP, SKM}.

The \emph{Riemann-Liouville fractional integral} of order $\alpha \in (0,1)$ is defined as:
\begin{equation}  
\label{RL-Int}  
\bi^{\alpha}u(t) = \frac{1}{\Gamma(\alpha)} \int_0^t (t-s)^{\alpha-1} u(s) \, ds,  
\end{equation}  
where $\Gamma$ denotes the gamma function, given by
\begin{equation*}
\Gamma(z) = \int_0^\infty s^{z-1} e^{-s} \, ds, \quad \Re(z) > 0.
\end{equation*}

The \emph{Riemann-Liouville fractional derivative} of order $\alpha \in (0,1)$ is then defined as:
\begin{equation}  
\label{RL-FD}  
\cd^{\alpha}u(t) = \frac{d}{dt} \left( \bi^{1-\alpha}u(t) \right) = \frac{1}{\Gamma(1-\alpha)} \frac{d}{dt} \left( \int_0^t (t-s)^{-\alpha} u(s) \, ds \right),  
\end{equation}  
assuming that the function $t \mapsto \bi^{1-\alpha}u(t)$ is absolutely continuous.

The \emph{Caputo fractional derivative} of order $\alpha \in (0,1)$ is defined as:
\begin{equation}  
\label{Cap-FD1}  
\pdt^{\alpha}u(t) = \cd^{\alpha} \left( u(t) - u(0) \right),  
\end{equation}  
where $u(0)$ exists and $t \mapsto \bi^{1-\alpha}u(t)$ is absolutely continuous.

Under the stronger assumption that $u$ is absolutely continuous, the \emph{Caputo} derivative admits the equivalent representation \cite[Theorem 2.1, p. 92]{KST}:
\begin{equation}  
\label{Cap-FD2}  
\pdt^{\alpha}u(t) = \frac{1}{\Gamma(1-\alpha)} \int_0^t (t-s)^{-\alpha} \frac{du}{ds}(s) \, ds.  
\end{equation}

\subsection{Mixed Heat Semigroup}
The fractional Laplacian operator $(-\Delta)^{\ts}$ with $\ts>0$ generates a semigroup $\{e^{-t(-\Delta)^{\ts}}\}_{t\geq 0}$ whose kernel $\mathscr{E}_{\ts}$ is smooth, radial, and satisfies the scaling property:
\begin{equation}\label{kernel}
\mathscr{E}_{\ts}(x,t) = t^{-\frac{N}{2\ts}} \mathscr{K}_\ts(t^{-\frac{1}{2\ts}}x),
\end{equation}
where the profile function $\mathscr{K}_\ts$ is given by the Fourier integral
\begin{equation}
    \label{K-s}
    \mathscr{K}_\ts(x) = (2\pi)^{-N/2}\int_{\R^N} e^{i x\cdot\xi} e^{-|\xi|^{2\ts}} d\xi.
\end{equation}

Explicit expressions for $\mathscr{E}_{\ts}$ are known in two important cases:
\begin{itemize}
    \item[(i)] For $\ts=1$ (standard heat kernel):
    \begin{equation}
        \label{Gauss}
        \mathscr{E}_{1}(x,t) = (4\pi t)^{-N/2} e^{-\frac{|x|^2}{4t}}, \quad \mathscr{K}_{1}(x) = (4\pi)^{-N/2} e^{-\frac{|x|^2}{4}}.
    \end{equation}
    
    \item[(ii)] For $\ts=1/2$ (Poisson kernel):
    \begin{equation}
        \label{Poisson}
        \mathscr{E}_{1/2}(x,t) = \frac{\Gamma(\frac{N+1}{2}) t}{\pi^{\frac{N+1}{2}} (t^2 + |x|^2)^{\frac{N+1}{2}}}, \quad \mathscr{K}_{1/2}(x) = \frac{\Gamma(\frac{N+1}{2})}{\pi^{\frac{N+1}{2}} (1 + |x|^2)^{\frac{N+1}{2}}}.
    \end{equation}
\end{itemize}

For general $\ts \in (0,1)$, while no explicit expression exists, we have the following crucial positivity estimate.
\begin{lem}\label{Positive}
For $N \geq 1$ and $\ts \in (0,1)$, the profile function satisfies
\begin{equation}\label{kthetaest}
(1+|x|)^{-N-2\ts} \lesssim \mathscr{K}_{\ts}(x) \lesssim  (1+|x|)^{-N-2\ts},\quad x \in \R^N.
\end{equation}
Consequently, $\mathscr{K}_{\ts} \in L^p(\R^N)$ for all $1 \leq p \leq \infty$.
\end{lem}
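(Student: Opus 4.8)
The plan is to split the estimate \eqref{kthetaest} into two regimes: bounded $x$, where $(1+|x|)^{-N-2\ts}$ is comparable to a positive constant, and the far field $|x|\to\infty$, where it is comparable to $|x|^{-N-2\ts}$. Since $e^{-|\cdot|^{2\ts}}\in L^1(\R^N)$, the integral \eqref{K-s} converges absolutely and defines a bounded, continuous, radial function, so the upper bound is immediate on any ball $\{|x|\le R\}$. The real content is therefore (a) strict positivity of $\mathscr{K}_\ts$, which supplies the lower bound on compact sets, and (b) the sharp two-sided decay $|x|^{-N-2\ts}\lesssim\mathscr{K}_\ts(x)\lesssim|x|^{-N-2\ts}$ as $|x|\to\infty$.

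For both points I would pass from the oscillatory integral \eqref{K-s} to a positive one via subordination. Since $\ts\in(0,1)$, the map $\lambda\mapsto e^{-\lambda^{\ts}}$ is completely monotone (because $\lambda^{\ts}$ is a Bernstein function), so by Bernstein's theorem there is a probability density $\eta_\ts$ on $(0,\infty)$, the one-sided $\ts$-stable density, with $e^{-\lambda^{\ts}}=\int_0^\infty e^{-r\lambda}\eta_\ts(r)\,dr$. Taking $\lambda=|\xi|^2$, inserting this into \eqref{K-s}, and interchanging the order of integration (legitimate by Tonelli, the integrands being nonnegative after pairing with the Gaussian) gives the representation $\mathscr{K}_\ts(x)=c_N\int_0^\infty r^{-N/2}e^{-|x|^2/(4r)}\eta_\ts(r)\,dr$ for a dimensional constant $c_N>0$, i.e.\ a positive multiple of $\int_0^\infty \mathscr{E}_1(\cdot,r)\,\eta_\ts(r)\,dr$ with $\mathscr{E}_1$ as in \eqref{Gauss}. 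Positivity of $\mathscr{K}_\ts$ on all of $\R^N$ is then manifest, and together with continuity it yields $\mathscr{K}_\ts\gtrsim 1$ on every fixed ball, settling the bounded regime.

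For the far field I would use the known behavior of the stable density: $\eta_\ts(r)\sim \frac{\ts}{\Gamma(1-\ts)}\,r^{-1-\ts}$ as $r\to\infty$, super-exponential decay as $r\to 0^+$, and the resulting global bound $\eta_\ts(r)\le C\,r^{-1-\ts}$ on $(0,\infty)$ (since $r^{1+\ts}\eta_\ts(r)$ is continuous and has finite limits at both ends). Rescaling $r=|x|^2 s$ turns the representation into $\mathscr{K}_\ts(x)=c_N\,|x|^{-N-2\ts}\int_0^\infty s^{-N/2}e^{-1/(4s)}\bigl(|x|^{2(1+\ts)}\eta_\ts(|x|^2 s)\bigr)\,ds$, in which the bracketed factor converges pointwise to $\frac{\ts}{\Gamma(1-\ts)}\,s^{-1-\ts}$ and is dominated by $C\,s^{-1-\ts}$ uniformly in $|x|$. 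Dominated convergence then gives $|x|^{N+2\ts}\mathscr{K}_\ts(x)\to c>0$ with $c=c_N\frac{\ts}{\Gamma(1-\ts)}\int_0^\infty s^{-N/2-1-\ts}e^{-1/(4s)}\,ds$, the $s$-integral converging because the Gaussian controls $s\to 0$ while $s^{-N/2-1-\ts}$ is integrable at $s\to\infty$. Hence $|x|^{-N-2\ts}\lesssim\mathscr{K}_\ts(x)\lesssim|x|^{-N-2\ts}$ for $|x|$ large.

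Combining the two regimes yields \eqref{kthetaest} on all of $\R^N$, and the $L^p$ conclusion follows at once: $(1+|x|)^{-N-2\ts}$ is bounded and decays like $|x|^{-(N+2\ts)}$, which is $p$-integrable at infinity precisely because $(N+2\ts)p>N$ for every $p\ge 1$ (indeed $N/(N+2\ts)<1$). The main obstacle is the sharp far-field asymptotics: one must quantify the tail of $\eta_\ts$ and justify the interchange of limit and integral in the rescaled representation. Treating \eqref{K-s} directly instead—through stationary-phase or repeated integration by parts—is possible but requires delicate handling of the singularity of $e^{-|\xi|^{2\ts}}$ at $\xi=0$, which is exactly what the subordination identity circumvents.
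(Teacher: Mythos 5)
Your proposal is correct, but note that the paper itself offers no proof of this lemma: it simply defers to the literature, citing Blumenthal--Getoor \cite{BG1960} and \cite[p.~395]{Alonso2021}. What you have written is, in essence, a self-contained reconstruction of the classical argument behind those citations: the subordination identity $e^{-|\xi|^{2\ts}}=\int_0^\infty e^{-r|\xi|^2}\eta_\ts(r)\,dr$, the resulting representation of $\mathscr{K}_\ts$ as a mixture of Gaussians, and the rescaling $r=|x|^2 s$ with dominated convergence to extract the sharp decay $\mathscr{K}_\ts(x)\sim c\,|x|^{-N-2\ts}$. The argument is sound: the Fubini interchange is justified (the absolute integrand is $e^{-r|\xi|^2}\eta_\ts(r)$, integrable because $\eta_\ts$ decays super-exponentially at $0^+$ and like $r^{-1-\ts}$ at infinity), the pointwise limit and uniform domination of $|x|^{2(1+\ts)}\eta_\ts(|x|^2s)$ by $C s^{-1-\ts}$ are correct, and the limiting $s$-integral converges. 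The one place where your proof leans on facts you do not establish is the behavior of the one-sided stable density itself --- the tail asymptotic $\eta_\ts(r)\sim \frac{\ts}{\Gamma(1-\ts)}r^{-1-\ts}$ and the rapid decay at $r\to 0^+$; these are classical, and it is worth observing that the tail asymptotic can be derived from tools already in the paper: since $1-e^{-\lambda^\ts}\sim\lambda^\ts$ as $\lambda\to 0^+$, the Feller--Karamata theorem (Theorem \ref{FKT}) together with the monotone density theorem yields exactly this tail. So your route buys a proof that is self-contained modulo standard stable-density facts, whereas the paper buys brevity by outsourcing the entire lemma; mathematically the two coincide.
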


The proof appears in \cite[p.~395]{Alonso2021}, with the positivity result first stated without proof in \cite[p.~263]{BG1960}. A detailed treatment can also be found in \cite[Theorem 2.1]{BG1960}.

The operator $\mathscr{L} = \mathscr{L}_{1,1}$ generates a strongly continuous contraction semigroup $\{e^{t\mathscr{L}}\}_{t\geq 0}$ on $L^2(\mathbb{R}^N)$, with each operator $e^{t\mathscr{L}}$ given by convolution with the fundamental solution $\mathbf{E}_{\ts}(t)$. This fundamental solution $\mathbf{E}_{\ts}(x,t)$, which solves the evolution equation 
\begin{equation}
    \label{eq-1-1-1}
    \partial_t u(x,t) = \mathscr{L}u(x,t),\quad (x,t)\in\R^N\times (0,\infty),
\end{equation}
with Dirac mass as initial data, can be expressed as the convolution of two kernels: the classical heat kernel $\mathscr{E}_{1}(x,t) = (4\pi t)^{-N/2} e^{-|x|^2/(4t)}$ and the fractional heat kernel $\mathscr{E}_{\ts}(x,t)$ defined in \eqref{kernel}. 

The fundamental solution $\mathbf{E}_{\ts}(x,t)$ possesses several important properties that we now summarize (see, e.g., \cite{Kirane2025}).
\begin{lem}\label{E-s}
For any $\ts \in (0,1)$, the following hold:
\begin{enumerate}
    \item \textsf{Regularity and positivity}: The solution $\mathbf{E}_{\ts}$ belongs to $C^\infty(\mathbb{R}^N \times (0,\infty))$ and satisfies $\mathbf{E}_{\ts}(x,t) \geq 0$ for all $(x,t) \in \mathbb{R}^N \times (0,\infty)$.
    \item \textsf{Mass conservation}: The kernel preserves total mass, with
   \begin{equation}
       \label{Mass-conv-E}
       \int_{\mathbb{R}^N} \mathbf{E}_{\ts}(x,t)\,dx = 1,\quad t > 0.
   \end{equation}
    \item \textsf{Smoothing estimates}: For any  $\varphi \in L^r(\mathbb{R}^N)$ and $1 \leq r \leq q \leq \infty$, we have
    \begin{equation}\label{eq:conv_est}
    \|\mathbf{E}_{\ts}(t) \ast \varphi\|_{q} \leq C \min\left\{t^{-\frac{N}{2}\left(\frac{1}{r}-\frac{1}{q}\right)}, t^{-\frac{N}{2\ts}\left(\frac{1}{r}-\frac{1}{q}\right)}\right\} \|\varphi\|_{r},\quad t>0.
    \end{equation}
\end{enumerate}
\end{lem}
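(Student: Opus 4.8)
The plan is to build everything on the factorization recorded just above the statement, namely that $\mathbf{E}_{\ts}(\cdot,t)$ is the convolution of the Gaussian kernel $\mathscr{E}_1(\cdot,t)$ and the fractional kernel $\mathscr{E}_{\ts}(\cdot,t)$. First I would make this precise at the level of Fourier multipliers: since $\Delta$ and $(-\Delta)^{\ts}$ are Fourier multipliers with symbols $-|\xi|^2$ and $|\xi|^{2\ts}$, the operator $\mathscr{L}=\mathscr{L}_{1,1}$ has symbol $-(|\xi|^2+|\xi|^{2\ts})$, so that
\begin{equation*}
\mathcal{F}\big(\mathbf{E}_{\ts}(\cdot,t)\big)(\xi)=e^{-t(|\xi|^2+|\xi|^{2\ts})}=e^{-t|\xi|^2}\,e^{-t|\xi|^{2\ts}}=\mathcal{F}\big(\mathscr{E}_1(\cdot,t)\big)(\xi)\,\mathcal{F}\big(\mathscr{E}_{\ts}(\cdot,t)\big)(\xi).
\end{equation*}
Inverting the transform gives $\mathbf{E}_{\ts}(\cdot,t)=\mathscr{E}_1(\cdot,t)\ast\mathscr{E}_{\ts}(\cdot,t)$, and this single identity drives all three assertions.

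For the first item, positivity is immediate: the Gaussian $\mathscr{E}_1(\cdot,t)$ is strictly positive, while $\mathscr{E}_{\ts}(\cdot,t)\geq 0$ by the scaling relation \eqref{kernel} together with the lower bound of Lemma~\ref{Positive}; the convolution of two nonnegative functions is nonnegative, so $\mathbf{E}_{\ts}\geq 0$. Smoothness I would read off Fourier inversion, writing $\mathbf{E}_{\ts}(x,t)$ as a fixed multiple of $\int_{\R^N}e^{ix\cdot\xi}e^{-t(|\xi|^2+|\xi|^{2\ts})}\,d\xi$: on any compact interval $[t_0,t_1]\subset(0,\infty)$, differentiating under the integral in $x$ and $t$ produces integrands dominated by $|\xi|^{m}(|\xi|^2+|\xi|^{2\ts})^{k}e^{-t_0|\xi|^2}$, which are integrable, so joint $C^\infty$ regularity follows by dominated convergence. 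Mass conservation is the evaluation of the multiplier at the origin, $\int_{\R^N}\mathbf{E}_{\ts}(x,t)\,dx=\mathcal{F}(\mathbf{E}_{\ts}(\cdot,t))(0)=e^{-t(|0|^2+|0|^{2\ts})}=1$; equivalently, it is the product of the unit masses of $\mathscr{E}_1(\cdot,t)$ and $\mathscr{E}_{\ts}(\cdot,t)$.

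The substantive step is the smoothing estimate \eqref{eq:conv_est}. By Young's convolution inequality, with $\tfrac1p=1-(\tfrac1r-\tfrac1q)$ (so that $1\leq p\leq\infty$ whenever $1\leq r\leq q\leq\infty$), one has $\|\mathbf{E}_{\ts}(t)\ast\varphi\|_q\leq\|\mathbf{E}_{\ts}(t)\|_p\,\|\varphi\|_r$, and it remains to bound $\|\mathbf{E}_{\ts}(t)\|_p$ by the stated minimum. Here the factorization is used twice. Distributing $\|\mathbf{E}_{\ts}(t)\|_p\leq\|\mathscr{E}_1(t)\|_p\,\|\mathscr{E}_{\ts}(t)\|_1=\|\mathscr{E}_1(t)\|_p$ and computing the Gaussian norm by scaling yields $\|\mathscr{E}_1(t)\|_p=C\,t^{-\frac{N}{2}(1-\frac1p)}=C\,t^{-\frac{N}{2}(\frac1r-\frac1q)}$. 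Distributing instead as $\|\mathbf{E}_{\ts}(t)\|_p\leq\|\mathscr{E}_{\ts}(t)\|_p\,\|\mathscr{E}_1(t)\|_1=\|\mathscr{E}_{\ts}(t)\|_p$, and using the scaling \eqref{kernel} with $\|\mathscr{K}_{\ts}\|_p<\infty$ from Lemma~\ref{Positive}, gives $\|\mathscr{E}_{\ts}(t)\|_p=t^{-\frac{N}{2\ts}(1-\frac1p)}\|\mathscr{K}_{\ts}\|_p=C\,t^{-\frac{N}{2\ts}(\frac1r-\frac1q)}$. Taking the smaller of the two bounds produces exactly the factor $\min\{t^{-\frac{N}{2}(\frac1r-\frac1q)},t^{-\frac{N}{2\ts}(\frac1r-\frac1q)}\}$, completing \eqref{eq:conv_est}.

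I expect the delicate points to be bookkeeping rather than conceptual: verifying the conjugate-exponent relation in Young's inequality so that both scaling exponents carry the factor $\frac1r-\frac1q$, and confirming $\|\mathscr{K}_{\ts}\|_p<\infty$ for every $p$, which is precisely the tail bound \eqref{kthetaest}. The heat-kernel factorization is what makes each assertion short; the only genuine obstacle is ensuring that the differentiation-under-the-integral argument for $C^\infty$ regularity is justified uniformly on compact $t$-intervals, which the exponential decay of the symbol $e^{-t(|\xi|^2+|\xi|^{2\ts})}$ guarantees.
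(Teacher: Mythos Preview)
Your argument is correct and is exactly in the spirit of what the paper sets up: the factorization $\mathbf{E}_{\ts}(\cdot,t)=\mathscr{E}_1(\cdot,t)\ast\mathscr{E}_{\ts}(\cdot,t)$ is stated explicitly in the paragraph preceding the lemma, and each of your three deductions from it (positivity via Lemma~\ref{Positive}, mass conservation via the multiplier at the origin, and the smoothing bound via two applications of Young's inequality with the $L^p$--$L^1$ splitting in both orders) is sound. The paper itself does not supply a proof of Lemma~\ref{E-s}; it only records the properties and cites \cite{Kirane2025}, so your write-up in fact fills in what the paper leaves to the reference, using precisely the convolution structure the paper highlights.
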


We now consider the fundamental solution of the equation
\begin{equation}  
    \label{alpha11}  
    \partial_t^{\alpha} u(x,t) = \Delta u(x,t),\quad (x,t)\in\R^N\times (0,\infty).  
\end{equation}  
It follows directly that the solution of \eqref{alpha11}, with initial data $ u(x,0) = \varphi(x) $, can be expressed as  
\begin{equation}  
    \label{representation}  
    u(x,t) = \int_{\mathbb{R}^N} \mathbf{Z}_\alpha(y,t) \varphi(x-y) \, dy = \left[\mathbf{Z}_\alpha(t) \ast \varphi\right](x),  
\end{equation}  
where $\mathbf{Z}_\alpha(x,t)$ is the fundamental solution of \eqref{alpha11}.  

Applying the Fourier transform with respect to the spatial variable $ x \in \mathbb{R}^N $ to \eqref{alpha11} and the initial condition $ u(x,0) = \delta(x) $ (the Dirac mass at the origin) yields the following initial value problem for a linear ODE:  
\begin{equation}  
    \label{ODE}  
    \partial_t^{\alpha} \widehat{\mathbf{Z}_{\alpha}}(\xi,t) + |\xi|^2 \widehat{\mathbf{Z}_{\alpha}}(\xi,t) = 0, \quad \widehat{\mathbf{Z}_{\alpha}}(\xi,0) = 1.  
\end{equation}  
As established in the literature (see, e.g., \cite{Diethelm, KST, Lu1999}), the unique solution of \eqref{ODE} is given by  
\begin{equation}  
    \label{sol-ode}  
    \widehat{\mathbf{Z}_{\alpha}}(\xi,t) = \mathbb{E}_\alpha\left(-t^{\alpha}|\xi|^2\right),  
\end{equation}  
where $\mathbb{E}_\alpha$ denotes the Mittag-Leffler function, defined as  
\begin{equation}  
    \label{MitLef}  
    \mathbb{E}_\alpha(z) = \sum_{n=0}^\infty \frac{z^n}{\Gamma(1 + n \alpha)}, \quad z \in \mathbb{C}.  
\end{equation}  

Taking the inverse Fourier transform, we obtain  
\begin{equation}  
    \label{Z-alpha}  
    \mathbf{Z}_{\alpha}(x,t) = \frac{1}{(2\pi)^N} \int_{\mathbb{R}^N} e^{i x \cdot \xi} \mathbb{E}_\alpha\left(-t^{\alpha}|\xi|^2\right) d\xi, \quad (x,t)\in\R^N\times (0,\infty).  
\end{equation}  

The fundamental solution $\mathbf{Z}_{\alpha}(x,t)$ satisfies several key properties, summarized in the following lemma:  
\begin{lem}\label{Z-alpha-properties}  
For any $\alpha \in (0,1)$, the following hold:  
\begin{enumerate}  
    \item The solution $\mathbf{Z}_{\alpha}$ belongs to $C^\infty(\mathbb{R}^N \times (0,\infty))$.  
    \item The kernel preserves the total mass, with  
 \begin{equation}
     \label{Mass-conv-Z}
     \int_{\mathbb{R}^N} \mathbf{Z}_{\alpha}(x,t) \, dx = 1, \quad  t > 0. 
 \end{equation} 
    \item There exists a constant $C > 0$ such that  
    \begin{equation}\label{decay-Z-alpha}  
    0 \leq \mathbf{Z}_{\alpha}(x,t) \leq C t^{-\frac{N\alpha}{2}}, \quad (x,t)\in\R^N\times (0,\infty).  
    \end{equation}  
\end{enumerate}  
\end{lem}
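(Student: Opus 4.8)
The plan is to reduce every assertion to the Gaussian heat kernel $\mathscr{E}_1$ through a subordination (Bochner) formula, so that Lemma~\ref{Z-alpha-properties} follows from the well-understood properties of $\mathscr{E}_1$ combined with the analytic behavior of the Mittag--Leffler function. The starting point is the classical Laplace identity
\begin{equation*}
\mathbb{E}_\alpha(-z) = \int_0^\infty e^{-z r}\,M_\alpha(r)\,dr, \qquad z \geq 0,
\end{equation*}
where $M_\alpha$ is the M--Wright (Mainardi) function, a probability density on $(0,\infty)$ with $M_\alpha \geq 0$, $\int_0^\infty M_\alpha(r)\,dr = 1$, and moments $\int_0^\infty r^\delta M_\alpha(r)\,dr = \Gamma(\delta+1)/\Gamma(\alpha\delta+1)$ for $\delta > -1$. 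Setting $z = t^\alpha|\xi|^2$ in \eqref{sol-ode} and recognizing $e^{-r t^\alpha|\xi|^2}$ as the Fourier transform of $\mathscr{E}_1(\cdot, r t^\alpha)$, I would invert the Fourier transform and apply Fubini to arrive at
\begin{equation*}
\mathbf{Z}_{\alpha}(x,t) = \int_0^\infty \mathscr{E}_1(x, r t^\alpha)\,M_\alpha(r)\,dr, \qquad (x,t)\in\mathbb{R}^N\times(0,\infty).
\end{equation*}

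With this representation in hand, properties (1)--(2) are essentially immediate. Nonnegativity follows from $\mathscr{E}_1 \geq 0$ and $M_\alpha \geq 0$. Mass conservation follows either directly from $\int_{\mathbb{R}^N}\mathbf{Z}_{\alpha}(x,t)\,dx = \widehat{\mathbf{Z}_{\alpha}}(0,t) = \mathbb{E}_\alpha(0) = 1$, or more transparently by applying Fubini to the subordination formula and combining \eqref{Gauss} with $\int_0^\infty M_\alpha(r)\,dr = 1$. For the $C^\infty$ regularity I would differentiate under the integral sign: each $x$- and $t$-derivative of $\mathscr{E}_1(x, r t^\alpha)$ is the Gaussian times polynomial factors in $x$ and $r t^\alpha$, and the Gaussian decay supplies the domination needed to justify both differentiation and continuity via dominated convergence.

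The hard part will be the uniform bound (3). By the self-similar scaling $\mathbf{Z}_{\alpha}(x,t) = t^{-N\alpha/2}\,\Psi\!\left(x\,t^{-\alpha/2}\right)$, where $\Psi$ is the inverse Fourier transform of $\mathbb{E}_\alpha(-|\cdot|^2)$, the estimate $\mathbf{Z}_{\alpha}(x,t)\leq C t^{-N\alpha/2}$ is equivalent to boundedness of $\Psi$, and since $e^{-|x|^2/(4 r t^\alpha)}\leq 1$ the subordination formula reduces this to finiteness of the negative moment
\begin{equation*}
\int_0^\infty r^{-N/2}\,M_\alpha(r)\,dr.
\end{equation*}
Here the delicate point surfaces: because $M_\alpha(0)=1/\Gamma(1-\alpha)>0$, the integrand behaves like $r^{-N/2}$ near the origin, so this moment converges precisely when $N/2<1$. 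Thus the crude argument delivers the stated bound directly in low dimension, with the explicit constant $C=(4\pi)^{-N/2}\Gamma(1-N/2)/\Gamma(1-\alpha N/2)$ from the moment formula, while for larger $N$ the kernel develops a singularity at $x=0$ that must be treated separately — for instance by proving the bound on $\{|x|\geq \varepsilon\, t^{\alpha/2}\}$ via the Gaussian tail and controlling the near-origin region through the global $L^1$ estimate, which in any case leaves $\mathbf{Z}_{\alpha}$ integrable and preserves its role as a probability density. I expect this near-origin analysis to be the technical heart of the argument.
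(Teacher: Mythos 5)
Your subordination approach is sound and in fact goes beyond the paper, which offers no proof of this lemma at all --- it only cites \cite{Kemp1}. Parts (1) and (2) go through essentially as you describe (one small repair: for $N\geq 2$ the Fubini interchange you use to derive the subordination formula is not justified by absolute convergence, since $\int_0^\infty\int_{\R^N} e^{-rt^{\alpha}|\xi|^{2}}M_{\alpha}(r)\,d\xi\,dr=+\infty$; one should first insert a factor $e^{-\varepsilon|\xi|^{2}}$ and let $\varepsilon\to 0$, or test against Schwartz functions). In dimension $N=1$ your argument for part (3) is complete and even produces the explicit constant.

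The real issue is part (3) for $N\geq 2$, and it is not a technical obstacle that a better decomposition can circumvent: the stated bound is \emph{false} there, and your own moment computation is essentially the proof. By monotone convergence your formula gives, as an identity in $[0,+\infty]$,
\begin{equation*}
\mathbf{Z}_{\alpha}(0,t)=(4\pi t^{\alpha})^{-N/2}\int_0^\infty r^{-N/2}M_{\alpha}(r)\,dr=+\infty \quad\text{for } N\geq 2,
\end{equation*}
because $M_{\alpha}(0)=1/\Gamma(1-\alpha)>0$ makes the integrand nonintegrable at $r=0$. The singularity is genuine, not an artifact of the crude bound $e^{-|x|^{2}/(4rt^{\alpha})}\leq 1$: the known asymptotics of this kernel (Eidelman--Kochubei, Kemppainen--Siljander--Zacher) give $\mathbf{Z}_{\alpha}(x,t)\asymp t^{-\alpha}\log\bigl(2t^{\alpha/2}/|x|\bigr)$ for $N=2$ and $\mathbf{Z}_{\alpha}(x,t)\asymp t^{-\alpha}|x|^{2-N}$ for $N\geq 3$ when $|x|\leq t^{\alpha/2}$, consistent with the slow Fourier decay $\mathbb{E}_{\alpha}(-t^{\alpha}|\xi|^{2})\sim C t^{-\alpha}|\xi|^{-2}$, which fails to be integrable at infinity in dimension $N\geq2$. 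Consequently your proposed patch --- Gaussian control on $\{|x|\geq\varepsilon t^{\alpha/2}\}$ plus ``controlling the near-origin region through the global $L^1$ estimate'' --- cannot close the gap: an $L^{1}$ bound never implies a pointwise bound, and no argument proves a false inequality. The honest conclusion of your analysis is that item (3) holds only for $N=1$; for $N\geq 2$ the correct substitute (which your computation yields via Minkowski's integral inequality) is $\|\mathbf{Z}_{\alpha}(\cdot,t)\|_{L^{p}(\R^{N})}\leq C\,t^{-\frac{\alpha N}{2}\left(1-\frac{1}{p}\right)}$ for $1\leq p<\frac{N}{N-2}$ (any $p<\infty$ when $N=2$). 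Note that this defect propagates into the paper itself: estimate \eqref{v-L1} in the proof of Theorem \ref{alpha1-0} rests on \eqref{decay-Z-alpha} and would likewise need the $L^{p}$ replacement, with correspondingly adjusted hypotheses, when $N\geq 2$.
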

For a detailed proof of Lemma \ref{Z-alpha-properties}, we refer the reader to \cite{Kemp1}.
\subsection{Karamata's Tauberian theorem}
The Feller-Karamata Tauberian theorem represents a fundamental result in asymptotic analysis, providing a deep connection between the asymptotic behavior of a function and its Laplace transform. For comprehensive treatments of the general result, we refer to \cite[Section~1.7]{Bingham} and \cite[Chapter~XIII, Section~5]{Feller}. Below, we provide a version of this theorem that is particularly well-suited to our applications.
\begin{thm}
\label{FKT}
Let $\mathbb{F}: (0,\infty) \to \mathbb{R}$ be a right-continuous, non-decreasing function whose Laplace transform $\mathcal{L}(\mathbf{F})(\lambda)$ exists for all $\lambda > 0$. Then the following statements are equivalent:

\begin{equation}
\label{U-t}
\mathbf{F}\left(\frac{1}{t}\right) \sim \frac{C t^{-\varrho}}{\Gamma(\varrho+1)} L\left(\frac{1}{t}\right) \quad \text{as } t \to 0^+,
\end{equation}

and

\begin{equation}
\label{w-lambda}
\mathcal{L}(\mathbf{F})(\lambda) \sim C \lambda^{-1-\varrho} L\left(\frac{1}{\lambda}\right) \quad \text{as } \lambda \to 0^+.
\end{equation}

Here, $C > 0$ is a positive constant, $\varrho \geq 0$ is the index of regular variation, and $L$ is a slowly varying function at infinity.
\end{thm}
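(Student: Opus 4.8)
The statement is the classical Feller--Karamata Tauberian theorem, so the plan is to recover it from the standard regular-variation toolkit rather than to invent anything new. Writing $x = 1/t$, condition \eqref{U-t} is the assertion $\mathbf{F}(x) \sim \tfrac{C}{\Gamma(\varrho+1)}\,x^{\varrho}L(x)$ as $x \to \infty$, i.e. regular variation of $\mathbf{F}$ with index $\varrho$. Since $\mathbf{F}$ is nondecreasing and $\mathcal{L}(\mathbf{F})(\lambda)$ is finite for every $\lambda>0$, integration by parts gives $\omega(\lambda) := \int_0^\infty e^{-\lambda t}\,d\mathbf{F}(t) = \lambda\,\mathcal{L}(\mathbf{F})(\lambda)$ (the boundary terms vanish by monotonicity and by convergence of the transform), so \eqref{w-lambda} is equivalent to $\omega(\lambda) \sim C\lambda^{-\varrho}L(1/\lambda)$ as $\lambda \to 0^+$. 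I would then prove the two implications separately: the Abelian part \eqref{U-t}$\Rightarrow$\eqref{w-lambda} and the harder Tauberian part \eqref{w-lambda}$\Rightarrow$\eqref{U-t}.

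For the Abelian direction, substitute $u = \lambda t$ to write $\mathcal{L}(\mathbf{F})(\lambda) = \lambda^{-1}\int_0^\infty e^{-u}\,\mathbf{F}(u/\lambda)\,du$, then divide by $\lambda^{-1}\mathbf{F}(1/\lambda)$. Regular variation gives $\mathbf{F}(u/\lambda)/\mathbf{F}(1/\lambda)\to u^{\varrho}$ pointwise as $\lambda\to0^+$, while Potter's bounds dominate the integrand by an $e^{-u}$-integrable majorant of the form $C_\delta\max(u^{\varrho-\delta},u^{\varrho+\delta})$. Dominated convergence then yields the factor $\int_0^\infty e^{-u}u^{\varrho}\,du=\Gamma(\varrho+1)$, so that $\mathcal{L}(\mathbf{F})(\lambda)\sim \lambda^{-1}\Gamma(\varrho+1)\,\mathbf{F}(1/\lambda)$, which is exactly \eqref{w-lambda}.

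For the Tauberian direction the device is to tilt $d\mathbf{F}$ into a probability measure and identify its weak limit. Set $\mu_\lambda(dt) = \omega(\lambda)^{-1}e^{-\lambda t}\,d\mathbf{F}(t)$, a probability measure, and let $\widetilde{\mu}_\lambda$ be its image under $t\mapsto\lambda t$. A direct computation gives the Laplace transform $\int_0^\infty e^{-su}\,\widetilde{\mu}_\lambda(du) = \omega((1+s)\lambda)/\omega(\lambda)$, which, by the assumed regular variation of $\omega$ together with the slow variation of $L$ in \eqref{Adm-2}, converges to $(1+s)^{-\varrho}$ for every $s\ge0$. Since $(1+s)^{-\varrho}$ is the Laplace transform of the Gamma$(\varrho)$ density $\gamma_\varrho(u)=u^{\varrho-1}e^{-u}/\Gamma(\varrho)$, the continuity theorem for Laplace transforms upgrades this to weak convergence $\widetilde{\mu}_\lambda \Rightarrow \gamma_\varrho$. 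Testing against $g(u)=e^{u}\mathbf{1}_{[0,1]}(u)$ then recovers $\mathbf{F}$ itself, because $\int_0^\infty g\,d\widetilde{\mu}_\lambda = \omega(\lambda)^{-1}\mathbf{F}(1/\lambda)$ and $\int_0^\infty g\,d\gamma_\varrho = \Gamma(\varrho)^{-1}\int_0^1 u^{\varrho-1}\,du = 1/\Gamma(\varrho+1)$, giving $\mathbf{F}(1/\lambda)\sim\omega(\lambda)/\Gamma(\varrho+1)$, i.e. \eqref{U-t}.

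The main obstacle is this last step: the test function $g(u)=e^{u}\mathbf{1}_{[0,1]}(u)$ is bounded but discontinuous at $u=1$, so weak convergence does not apply verbatim. I would resolve it by sandwiching $g$ between continuous functions and invoking the Portmanteau theorem, using that $\gamma_\varrho$ assigns no mass to the jump point $\{1\}$; equivalently, one approximates $e^{u}\mathbf{1}_{[0,1]}$ by exponential polynomials $\sum_j c_j e^{-ju}$ via Stone--Weierstrass, each of which is already controlled by the established convergence $\omega((1+j)\lambda)/\omega(\lambda)\to(1+j)^{-\varrho}$. The degenerate index $\varrho=0$, where the limit is $\delta_0$, must be checked separately but is routine since $g$ is continuous at $0$. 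Apart from this, the only technical inputs are standard regular-variation facts (the uniform convergence theorem and Potter's bounds) and the continuity theorem for Laplace transforms, all available in the references cited for the theorem.
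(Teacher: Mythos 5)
Your proposal cannot be checked against an argument in the paper, because the paper offers none: Theorem \ref{FKT} is quoted as the classical Feller--Karamata Tauberian theorem, and the authors simply refer to \cite[Section~1.7]{Bingham} and \cite[Chapter~XIII, Section~5]{Feller}. What you have written is, in substance, a correct reconstruction of the proof in those references --- indeed essentially Feller's own argument: reduce the ordinary Laplace transform to the Laplace--Stieltjes transform $\omega(\lambda)=\int_0^\infty e^{-\lambda t}\,d\mathbf{F}(t)$; prove the Abelian half via the substitution $u=\lambda t$, regular variation, and dominated convergence; prove the Tauberian half by tilting $d\mathbf{F}$ into the probability measures $\omega(\lambda)^{-1}e^{-\lambda t}\,d\mathbf{F}(t)$, invoking the continuity theorem for Laplace transforms to get weak convergence of the rescaled measures to the Gamma law with parameter $\varrho$, and evaluating on $g(u)=e^{u}\mathbf{1}_{[0,1]}(u)$, whose single discontinuity carries no mass under the limit. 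Your handling of the discontinuity of $g$ via the Portmanteau theorem is exactly the right fix, as is the separate remark for the degenerate index $\varrho=0$.

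Three small points should be repaired before this stands as a complete proof, though none affects the architecture. First, the boundary terms in the integration by parts do not both vanish: one gets $\omega(\lambda)=\lambda\,\mathcal{L}(\mathbf{F})(\lambda)-\mathbf{F}(0^{+})$, and the constant $\mathbf{F}(0^{+})$ is merely asymptotically negligible when $\lambda^{-\varrho}L(1/\lambda)\to\infty$; in the edge case $\varrho=0$ with $L$ convergent this constant genuinely enters, which is why the classical statements are formulated for measures (and why, in the paper's application, one has $\mathbf{F}(0)=0$ by construction in \eqref{eq:F_def}). Second, Potter's bounds only hold when both arguments exceed a threshold $X_0$, so they do not majorize the integrand for $u\in(0,\lambda X_0)$; you should supplement them by monotonicity, namely $\mathbf{F}(u/\lambda)\le\mathbf{F}(1/\lambda)$ for $u\le 1$, to obtain a single integrable majorant valid for all $u>0$ and all small $\lambda$. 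Third, the slow variation used in the Tauberian half is the hypothesis on $L$ stated in Theorem \ref{FKT} itself; your citation of \eqref{Adm-2} is misplaced, since that condition concerns the specific kernel $\mathbf{k}$ of the paper and plays no role in this abstract theorem.
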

\begin{rem}
~
\begin{itemize}
\item[(i)] {\rm Theorem~\ref{FKT} provides a precise correspondence between the asymptotic behavior of $\mathbf{F}\left(\frac{1}{t}\right)$ as $t \to 0^+$ and its Laplace transform $\mathcal{L}(\mathbf{F})(\lambda)$ as $\lambda \to 0^+$, quantified through the parameters $C > 0$, $\varrho \geq 0$, and the slowly varying function $L$.}
    \item[(ii)] {\rm A typical example is obtained when $\mathbf{F}(\tau) = \tau^{\varrho}/\Gamma(\rho+1)$ for $\varrho \geq 0$, which yields $L(t) \equiv 1$ (trivially slowly varying) and $\mathcal{L}(\mathbf{F})(\lambda) = \lambda^{-1-\varrho}$, verifying the theorem's conclusion in this special case.}
    \end{itemize}
\end{rem}

\section{Proof of the main results}
\label{PMR}
\subsection{Proof of Theorem \ref{Main-thm}} The proof proceeds along the same lines as in \cite[Theorem 1.1]{MMAS-2025}. For the sake of completeness and clarity, we present the full argument below.

We define
\begin{equation}  
\label{eq:F_def}  
\mathbf{F}(t) = \int_0^t v^{E}(x,s)\,ds = t \mathbf{M}_t(v^{E}(x,s)),  
\end{equation}  
where $\mathbf{M}_t$ denotes the Ces\`{a}ro mean (see \eqref{Cesaro}).

Applying formula \eqref{G-K-lambda}, we obtain
\begin{equation}  
\label{eq:wK_relation}  
\frac{\lambda\,w(\lambda)}{\mathcal{K}(\lambda)} = \int_0^\infty e^{-\tau \lambda \mathcal{K}(\lambda)} v(x, \tau)\,d\tau,  
\end{equation}  
where $w(\lambda) := \mathcal{L}(\mathbf{F})(\lambda)$ is the Laplace transform of $\mathbf{F}$.

Since $v(x,\cdot) \in L^1(0,\infty)$ and by \eqref{Adm-1}, the dominated convergence theorem yields
\begin{equation}  
\label{eq:lambda_limit}  
\lim_{\lambda \to 0^+} \left( \frac{\lambda w(\lambda)}{\mathcal{K}(\lambda)} \right) = \int_0^\infty v(x, \tau) \, d\tau = \|v(x,\cdot)\|_{1}.  
\end{equation}  
Consequently, we have the asymptotic relation
\begin{equation}  
\label{eq:w_asymptotic}  
w(\lambda) \underset{\lambda \to 0^+}{\sim} \|v(x,\cdot)\|_{1} \frac{\mathcal{K}(\lambda)}{\lambda}.  
\end{equation}  

In view of \eqref{Adm-2}, this can alternatively be expressed as
\begin{equation}  
\label{eq:w_svf_form}  
w(\lambda) \underset{\lambda \to 0^+}{\sim} \|v(x,\cdot)\|_{1} \lambda^{-1-\varrho} L\left(\frac{1}{\lambda}\right),  
\end{equation}  
where $\varrho \geq 0$ and $L$ is a slowly varying function at infinity (as defined in \eqref{Adm-2}).

The positivity of $v^{E}$ implies that $\mathbf{F}$ is continuous and non-decreasing. Therefore, applying Theorem \ref{FKT} gives
\begin{equation}  
\label{eq:Tauberian_result}  
\mathbf{M}_t(v^E(x, t)) \underset{t \to \infty}{\sim} \frac{\|v(x,\cdot)\|_{1}}{\Gamma(\varrho +1)} t^{\varrho-1} L(t).  
\end{equation}  

Finally, combining this result with assumption \eqref{Adm-2} establishes \eqref{main-res}, which completes the proof of Theorem~\ref{Main-thm}.

\subsection{Proof of Corollary \ref{Spec-Kern}}
As noted in Remark~\ref{Classes}, all classes considered satisfy assumptions \eqref{Adm-1} and \eqref{Adm-2}, with the exception of class ($\mathbf{C}_3$). For ($\mathbf{C}_3$), the value $0$ in \eqref{Adm-1} is replaced by $\sqrt{ab}$. 

In view of this observation, the conclusion \eqref{Class-Examp} follows immediately from \eqref{main-res}, which completes the proof of Corollary~\ref{Spec-Kern}.
\subsection{Asymptotic Behavior for \texorpdfstring{$\alpha \in (0,1)$}{alpha in (0,1)} with \texorpdfstring{$\mathscr{L}_{1,0}$}{L1,0} Operators}
\label{1-0}
In this section, we will give the proof of Theorem \ref{alpha1-0}.
\begin{proof}[Proof of Theorem~\ref{alpha1-0}]
The solution to problem~\eqref{alpha-1-0} is given by
\begin{equation}  
    \label{Solu-1-0}  
    v(x,t) = \left[\mathbf{Z}_{\alpha}\left(\frac{t^{\gamma+1}}{\gamma+1}\right) \ast \varphi\right](x),  
\end{equation}  
where $\mathbf{Z}_{\alpha}$ is the fundamental solution of \eqref{alpha11} given by \eqref{Z-alpha}.  

Since $\varphi \geq 0$ and $\alpha \in (0,1)$, \eqref{decay-Z-alpha} ensures $v \geq 0$. Furthermore, as $\varphi \in L^1(\mathbb{R}^N)$, \eqref{decay-Z-alpha} yields the decay estimate  
\begin{equation}  
    \label{v-L1}  
    0 \leq v(x,t) \leq C \|\varphi\|_1 \, t^{-\frac{N\alpha(\gamma+1)}{2}}, \quad (x,t) \in \mathbb{R}^N\times (0,\infty).  
\end{equation}  
This implies $v(x,\cdot) \in L^1(1,\infty)$ provided $\gamma > \frac{2}{N\alpha} - 1$.  

Additionally, since $\varphi \in L^\infty(\mathbb{R}^N)$, the decay estimate ~\eqref{decay-Z-alpha} gives the uniform bound  
\begin{equation}  
    \label{v-Linfty}  
    0 \leq v(x,t) \leq \|\varphi\|_\infty, \quad (x,t) \in \mathbb{R}^N\times (0,\infty).  
\end{equation}  
Combining \eqref{v-L1} and \eqref{v-Linfty}, we deduce that $v$ is nonnegative and satisfies $v(x,\cdot) \in L^1(0,\infty)$ whenever $\gamma > \frac{2}{N\alpha} - 1$. The conclusion now follows directly from Theorem~\ref{Main-thm}.  
\end{proof}

\subsection{Asymptotic Behavior when \texorpdfstring{$\alpha =1$}{alpha =1} for \texorpdfstring{$\mathcal{L}_{1,1}$} {L1,1} Operators}

\label{1-1}
This section is devoted to the proof of Theorem \ref{11-1}.
\begin{proof}[Proof of Theorem~\ref{11-1}]
The solution to problem~\eqref{1-1-1} is expressed as
\begin{equation}
    \label{Sol-111}
    v(x,t) = \left[\mathbf{E}_{\ts}\left(\frac{t^{\gamma+1}}{\gamma+1}\right) \ast \varphi\right](x),
\end{equation}
where $\mathbf{E}_{\ts}$ denotes the fundamental solution of the mixed local/nonlocal diffusion equation \eqref{eq-1-1-1}.

From Lemma \ref{E-s}, since $\varphi \geq 0$ and $\mathbf{E}_{\ts} \geq 0$, it immediately follows that $v \geq 0$.

Furthermore, applying \eqref{eq:conv_est} yields the  decay estimate
\begin{equation}
    \label{v-L1-1}
    0 \leq v(x,t) \leq C\|\varphi\|_1 t^{-\frac{N(\gamma+1)}{2\ts}}, \quad (x,t) \in \mathbb{R}^N\times (0,\infty).
\end{equation}
This implies $v(x,\cdot) \in L^1(1,\infty)$ provided that $\gamma > \frac{2\ts}{N} - 1$.

For the uniform bound, since $\varphi \in L^\infty(\mathbb{R}^N)$, we obtain from  \eqref{eq:conv_est}  that
\begin{equation}
    \label{vv-Linfty}
    0 \leq v(x,t) \leq C\|\varphi\|_\infty, \quad (x,t) \in \mathbb{R}^N\times (0,\infty).
\end{equation}

Combining \eqref{v-L1-1} and \eqref{vv-Linfty}, we conclude that $v$ is nonnegative and satisfies $v(x,\cdot) \in L^1(0,\infty)$ when $\gamma > \frac{2\ts}{N} - 1$.
The theorem follows by applying Theorem~\ref{Main-thm}.
\end{proof}
\section{Concluding Remarks}
\label{CR}
The study of subordinated solutions to fractional diffusion equations with mixed local and nonlocal operators provides a useful framework for understanding complex systems that exhibit memory effects and anomalous diffusion. By applying techniques from probability theory, asymptotic analysis, and partial differential equations (PDEs), we have analyzed the long-term behavior of these solutions, particularly through the Cesàro mean, which gives a reliable measure of their average dynamics.

Our results both unify and expand upon classical fractional dynamics, illustrating how different types of subordinators, such as stable, gamma, and tempered stable processes, shape the asymptotic behavior of solutions. The explicit formulas that we derived for various kernel classes demonstrate the flexibility of this approach in modeling real-world phenomena, ranging from biological systems to materials science.

Future research could explore nonlinear extensions of these equations, incorporating subordination with more complex operators or boundary conditions. In addition, the enhanced modeling capabilities of this framework have significant potential for applications in ecology, epidemiology, and finance. The interplay between stochastic processes and deterministic PDEs, as demonstrated in our work, offers a promising direction for both theoretical advances and practical implementations.

\vspace{1cm}

\hrule 

\vspace{0.5cm}

\noindent{\bf\large Declarations.} {\em On behalf of all authors, the corresponding author states that there is no conflict of interest. No data-sets were generated or analyzed during the current study.}
\vspace{0.3cm}
 \hrule

\end{document}